\newtheoremstyle{thmstyle}%style name
  {\medskipamount}%space before
  {\smallskipamount}%space after
  {\slshape}%font used
  {0pt}%indentation
  {\bfseries}%modifier theorem head
  {.}%punctuation between theorem head and body
  { }%space after punctuation
  {\thmname{#1}\thmnumber{ #2}{\normalfont\thmnote{ (#3)}}}%theorem specifier
\newtheoremstyle{plainstyle}%style name
  {\medskipamount}%space before
  {\smallskipamount}%space after
  {\rmfamily}%font used
  {0pt}%indentation
  {\bfseries}%modifier theorem head
  {.}%punctuation between theorem head and body
  { }%space after punctuation
  {\thmname{#1}\thmnumber{ #2}{\normalfont\thmnote{ (#3)}}}%theorem specifier
\theoremstyle{thmstyle}
\newtheorem{theorem}{Theorem}[section]
\newtheorem{lemma}[theorem]{Lemma}
\newtheorem{proposition}[theorem]{Proposition}
\newtheorem{claim}[theorem]{Claim}
\theoremstyle{plainstyle}
\newtheorem{definition}[theorem]{Definition}
\newtheorem{remark}{Remark}
\newtheorem{example}{Example}
\newenvironment{proofof}[1]{\begin{proof}[Proof of #1.]}{\end{proof}}
\setlist[enumerate]{label={\roman*.}, ref={(\roman*)}}
\newcommand{\rn}{\bm}
\newcommand{\df}{\stackrel{\text{def}}{=}}
\newcommand{\disjcup}{\mathbin{\stackrel\cdot\cup}}
\newcommand{\comp}{\mathbin{\circ}}
\newcommand{\rest}{\mathord{\vert}}
\newcommand{\floor}[1]{\left\lfloor #1\right\rfloor}
\newcommand{\ceil}[1]{\left\lceil #1\right\rceil}
\def\up{\mathord{\uparrow}}
\def\down{\mathord{\downarrow}}
\def\tot{\leftrightarrow}
\DeclareMathOperator{\thk}{th}
\def\Dopen{D_\opertorname{open}}
\DeclareMathOperator{\Tr}{Tr}
\def\Dopen{D_{\operatorname{open}}}
\DeclareMathOperator{\Forb}{Forb}
\DeclareMathOperator{\im}{im}
\DeclareMathOperator{\id}{id}
\newcommand{\EE}{\mathbb{E}}
\newcommand{\NN}{\mathbb{N}}
\newcommand{\cC}{\mathcal{C}}
\newcommand{\cF}{\mathcal{F}}
\newcommand{\cL}{\mathcal{L}}
\newcommand{\cM}{\mathcal{M}}
\newcommand{\cP}{\mathcal{P}}
\newcommand{\cS}{\mathcal{S}}
\newcommand{\cT}{\mathcal{T}}
\newcommand{\cU}{\mathcal{U}}
\newcommand{\TLinOrder}{T_{\operatorname{LinOrder}}}
\newcommand{\TGraph}{T_{\operatorname{Graph}}}
\newcommand{\TkHypergraph}[1][k]{T_{#1\operatorname{-Hypergraph}}}
\newcommand{\TCycOrder}{T_{\operatorname{CycOrder}}}
\newcommand{\TTournament}{T_{\operatorname{Tournament}}}
\title{On the abstract chromatic number and its computability for finitely axiomatizable theories}
\author{Leonardo N.~Coregliano}
\begin{document}
\maketitle

\begin{abstract}
  The celebrated Erd\H{o}s--Stone--Simonovits theorem characterizes the asymptotic maximum edge density in
  $\cF$-free graphs as $1 - 1/(\chi(\cF)-1) + o(1)$, where $\chi(\cF)$ is the minimum chromatic number of a
  graph in $\cF$. In~\cite[Examples~25 and~31]{CR19}, it was shown that this result can be extended to the
  general setting of graphs with extra structure: the maximum asymptotic density of a graph with extra
  structure without some \emph{induced} subgraphs is $1 - 1/(\chi(I) - 1) + o(1)$ for an appropriately defined
  \emph{abstract chromatic number} $\chi(I)$. As the name suggests, the original formula for the abstract
  chromatic number is so abstract that its (algorithmic) computability was left open.

  In this paper, we both extend this result to characterize maximum asymptotic density of
  $t$-cliques in of graphs with extra structure without some \emph{induced} subgraphs in terms of $\chi(I)$ and we
  present a more concrete formula for $\chi(I)$ that allows us to show its computability when both the extra
  structure and the forbidden subgraphs can be described by a \emph{finitely} axiomatizable universal first-order
  theory. Our alternative formula for $\chi(I)$ makes use of a partite version of Ramsey's Theorem for
  structures on first-order relational languages.
\end{abstract}

\section{Introduction}

Two of the most famous theorems in extremal graph theory are Tur\'{a}n's Theorem~\cite{Tur41} characterizing
the maximum number of edges in a graph without $\ell$-cliques $K_\ell$ and Ramsey's Theorem~\cite{Ram29} that
says that for every $\ell$, a large enough $k$-uniform hypergraph must either contain an $\ell$-clique
$K^{(k)}_\ell$ or an $\ell$-independent set $\overline{K}^{(k)}_\ell$. The celebrated
Erd\H{o}s--Stone--Simonovits Theorem~\cite{ES46,ES66} generalizes Tur\'{a}n's Theorem by characterizing the
maximum asymptotic edge density when we instead forbid a family $\cF$ of non-induced subgraphs in terms of the
smallest chromatic number of a graph in $\cF$. In another direction, Erd\H{o}s~\cite{Erd62} generalized
Tur\'{a}n's Theorem by characterizing the maximum number of $t$-cliques $K_t$ in a graph without
$\ell$-cliques $K_\ell$ ($t < \ell$) and Alon--Shikhelman~\cite{AS16} provided the following analogue of the
Erd\H{o}s--Stone--Simonovits Theorem.

\begin{theorem}[Erd\H{o}s--Stone--Simonovits~\cite{ES46,ES66}, Alon--Shikhelman~\cite{AS16}]\label{thm:ESSAS}
  Let $t\in\NN$ and let $\cF$ be a non-empty family of finite non-empty graphs. The maximum number of copies of
  $t$-cliques $K_t$ in a graph $G$ with $n$ vertices and without any non-induced copies of elements of $\cF$
  is
  \begin{align*}
    \prod_{i=1}^{t-1} \left(1 - \frac{j}{\chi(\cF)-1}\right)\binom{n}{t} + o(n^t),
  \end{align*}
  where $\chi(\cF)\df\min\{\chi(F) \mid F\in\cF\}$ is the minimum chromatic number of a graph in $\cF$.
\end{theorem}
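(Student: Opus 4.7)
Set $r\df\chi(\cF)-1$ and observe that the claimed bound equals, up to $o(n^t)$, the number $k_t(T(n,r))$ of $t$-cliques in the balanced Tur\'{a}n graph $T(n,r)$. The lower bound is constructive: since $\chi(T(n,r))=r<\chi(F)$ for every $F\in\cF$, the graph $T(n,r)$ is $\cF$-free; a direct count of ordered $t$-tuples whose vertices lie in distinct parts yields $\binom{r}{t}(n/r)^t+O(n^{t-1})$, which after expanding $\binom{n}{t}\prod_{i=1}^{t-1}(1-i/r)$ gives the stated value.

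For the upper bound, fix $F_0\in\cF$ realizing $\chi(F_0)=r+1$ and set $m\df|V(F_0)|$. A proper $(r+1)$-coloring of $F_0$ embeds it into the balanced blow-up $K_{r+1}[m]$, so every $\cF$-free graph $G$ is in particular $K_{r+1}[m]$-free. My plan is then a three-step reduction via classical black boxes. First, a \emph{blow-up supersaturation} step: any $n$-vertex $K_{r+1}[m]$-free graph has only $o(n^{r+1})$ copies of $K_{r+1}$, obtained by dependent random choice applied to the $(r+1)$-partite hypergraph whose hyperedges are the $K_{r+1}$'s of $G$. Second, a \emph{graph removal} step: the hypergraph removal lemma then lets us delete $o(n^2)$ edges from $G$ to obtain a genuinely $K_{r+1}$-free subgraph $G'$. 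Third, a \emph{Zykov symmetrization} step: among $K_{r+1}$-free $n$-vertex graphs, $T(n,r)$ maximizes the number of copies of $K_t$ for each $t\le r$, so $k_t(G')\le k_t(T(n,r))$. Since each deleted edge lies in at most $n^{t-2}$ copies of $K_t$, reinstating the removed edges costs only $o(n^t)$ copies, matching the claimed error.

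The main obstacle is the blow-up supersaturation step: converting a positive density of $K_{r+1}$'s into a prescribed blow-up $K_{r+1}[m]$. The cleanest route is dependent random choice, picking a random $m$-set $U$ and bounding the expected number of $r$-cliques having $U$ in their common neighborhood; an alternative is to iterate Kruskal--Katona-style shadow bounds, generalizing the K\H{o}v\'ari--S\'os--Tur\'{a}n pattern from bipartite to $(r+1)$-partite. Once this step is in place, combining it with the removal lemma and Zykov's theorem finishes the proof in a routine manner; a one-shot supersaturation argument bounding $k_t(G)$ directly from $\cF$-freeness would bypass the removal lemma but seems less modular.
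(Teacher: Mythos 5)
The paper does not prove Theorem~\ref{thm:ESSAS} --- it is stated as a black box with citations to \cite{ES46,ES66,AS16} --- so there is no in-paper argument to compare against. Your plan is correct and is essentially the Alon--Shikhelman proof: reduce $\cF$-freeness to $K_{r+1}[m]$-freeness via a proper $(r{+}1)$-coloring of a chromatic-number-minimizing $F_0\in\cF$; deduce that the graph has $o(n^{r+1})$ copies of $K_{r+1}$; delete $o(n^2)$ edges via the removal lemma to pass to a genuinely $K_{r+1}$-free subgraph; apply Zykov's theorem to bound the $K_t$-count by $k_t(T(n,r))$; and account for the deleted edges, each lying in at most $n^{t-2}$ copies of $K_t$. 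Two small remarks on your discussion of the supersaturation step: the hypergraph of $(r{+}1)$-cliques of $G$ is not itself $(r{+}1)$-partite, so the cleanest citation is Erd\H{o}s's 1964 hypergraph extension of K\H{o}v\'ari--S\'os--Tur\'{a}n (any $(r{+}1)$-uniform hypergraph with $\Omega(n^{r+1})$ edges contains the complete $(r{+}1)$-partite $(r{+}1)$-uniform hypergraph with parts of size $m$), which does not require the host to be partite; and the statement ``a $K_{r+1}[m]$-free graph has $o(n^{r+1})$ copies of $K_{r+1}$'' should be read with the usual uniform quantifier (for every $\varepsilon>0$ there is $N$ such that for $n\geq N$ every such $n$-vertex graph has at most $\varepsilon n^{r+1}$ copies), which is exactly what Erd\H{o}s's theorem delivers. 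With those clarifications the plan goes through.
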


A relatively new type of generalization of the Tur\'{a}n and Erd\H{o}s--Stone--Simonovits theorems is to study
maximization of the asymptotic edge density in graphs with extra structure while forbidding non-induced copies
of some family $\cF$. This has been done for ordered graphs~\cite{PT06}, cyclically ordered
graphs~\cite{BKV03} and edge-ordered graphs~\cite{GMNPTV19} and in all these cases a theorem similar to
Theorem~\ref{thm:ESSAS} for $t=2$ is proved in terms of a suitable generalization of the chromatic number (see
also~\cite{Tar19} for a survey). However, all these cases were done in an ad hoc fashion.

A uniform and general treatment of this problem was first done in~\cite[Examples~25 and~31]{CR19}: in the
general case, we want to maximize asymptotic edge density in a hereditary family of graphs with some extra
structure. Note that even when restricted to the usual graphs without extra structure, this is already a
generalization of Theorem~\ref{thm:ESSAS} as the forbidden subgraphs are \emph{induced}. This general setting
is formally captured by using open interpretations $I\:\TGraph\leadsto T$ (see Section~\ref{sec:prelim} for
precise definitions). Informally, we consider an arbitrary family $\cM$ of structures that is closed under
induced substructures and a construction $I$ that produces a graph $I(M)$ from an element $M$ of $\cM$ in a
local way in the sense that to decide whether $\{v,w\}$ is an edge of $I(M)$, it is enough to know only
information of $M\rest_{\{v,w\}}$; the problem then consists of maximizing the asymptotic edge density of
$I(M)$ over all possible choices of $M$ as the size of $M$ goes to infinity for a given fixed $I$. For
example, the aforementioned setting of (cyclically) ordered graphs are captured using the construction $I$
that simply ``forgets'' the (cyclic) order. A less trivial example of local combinatorial construction that is
captured in this framework is that of the graph of inversions of a permutation.

In~\cite[Example~31]{CR19}, it was shown that in this general setting a result analogous to
Theorem~\ref{thm:ESSAS} when $t = 2$ still holds for an appropriately defined \emph{abstract chromatic number}
$\chi(I)$. However, the formula for $\chi(I)$ presented in~\cite[Equation~(16)]{CR19}
(see~\eqref{eq:abstractchromatic} in Section~\ref{sec:prelim} below) is considerably abstract and it was left
open if $\chi(I)$ was (algorithmically) computable even when $T$ is assumed to be finitely axiomatizable.

In this paper, we both generalize the result of~\cite{CR19} to the case $t\geq 3$
(Theorem~\ref{thm:abstractchromatic}) and provide an alternative, more concrete formula for $\chi(I)$
(Theorem~\ref{thm:Ramseyabstractchromatic}). Such formula allows us to deduce that when $T$ is \emph{finitely}
axiomatizable, then $\chi(I)$ is (algorithmically) computable from a list of the axioms of $T$ and a
description of $I$ (Theorem~\ref{thm:computability}). Our alternative formula is based on a partite version of
Ramsey's Theorem (Theorem~\ref{thm:Ramsey}) for universal theories that informally says that given
$\ell,m\in\NN$, there exists $n\in\NN$ such that for every model $M$ and every partition of $M$ into $\ell$
parts all of size at least $n$ must have a ``uniform'' submodel on the same partition with all parts of size
$m$ (this version of Ramsey's Theorem for disjoint unions of theories of hypergraphs follows
from~\cite[Section~5]{GRS90} and the non-partite version, when $\ell=1$, for general theories follows from the
general Ramsey theory for systems of~\cite{NR89}; see Section~\ref{subsec:prelim:Ramsey} for more details). By
using these different formulas for $\chi(I)$, we can retrieve the results of~\cite{PT06,BKV03,GMNPTV19} on
ordered graphs, cyclically ordered graphs and edge-ordered graphs, respectively from the general theory (see
Section~\ref{sec:concrete}).

\bigskip

The paper is organized as follows. In Section~\ref{sec:prelim}, we establish notation needed to formally state
our results. In Section~\ref{sec:main}, we state our main results. In Section~\ref{sec:Turan}, we show
Theorem~\ref{thm:abstractchromatic}, which is the generalization of Theorem~\ref{thm:ESSAS} in terms of the
abstract chromatic number. In Section~\ref{sec:Ramsey}, we show the partite version of Ramsey's Theorem,
Theorem~\ref{thm:Ramsey}. In Section~\ref{sec:Ramseyabstractchromatic}, we prove
Theorem~\ref{thm:Ramseyabstractchromatic}, which provides an alternative formula for the abstract chromatic
number and we prove Theorem~\ref{thm:computability} on its computability when the theory is finitely
axiomatizable. In Section~\ref{sec:noninduced}, we provide other alternative formulas for the abstract
chromatic number in the ``non-induced'' setting. In Section~\ref{sec:concrete}, we illustrate how to use the
general theory to obtain easier formulas for some concrete theories. We conclude with some final remarks and
open problems in Section~\ref{sec:conclusion}.

\section{Preliminaries}
\label{sec:prelim}

Throughout the text, we let $\NN\df\{0,1,\ldots\}$ be the set of non-negative integers and let
$\NN_+\df\NN\setminus\{0\}$. We also let $[n]\df\{1,\ldots,n\}$ (and set $[0]\df\varnothing$). The usage of
the arrow $\rightarrowtail$ for a function will always presume the function to be injective. For a set $V$,
let $(V)_\ell$ be the set of all injective functions $\alpha\:[\ell]\rightarrowtail V$ and for one such
$\alpha$, we will use the notation $\alpha_i$ for $\alpha(i)$ when convenient. Let $2^V$ be the set of all
subsets of $V$ and $\binom{V}{\ell}\df\{A\subseteq V \mid \lvert A\rvert = \ell\}$.

\subsection{General combinatorial objects as models of a canonical theory}

We will be working in (a small fraction of) the framework of~\cite{CR19}, in which combinatorial objects
are encoded as models of a canonical theory. Each theory has an underlying finite first-order relational
language $\cL$, that is, the language $\cL$ is a finite set of predicate symbols $P\in\cL$, whose \emph{arity}
we denote by $k(P)\in\NN_+$. All languages will be assumed to be finite first-order relational languages. An
\emph{open formula} is any formula that does not contain any quantifiers and a \emph{universal formula} is a
formula of the form $\forall x_1\cdots\forall x_n, F(x_1,\ldots,x_n)$, where $F$ is an open formula. A theory
$T$ is \emph{universal} if all of its axioms are universal formulas. A universal theory $T$ is
\emph{canonical} if for every $P\in\cL$, the theory $T$ entails ($\vdash$) the formula
\begin{align}\label{eq:canonical}
  \forall\vec{x}, \left(\bigvee_{1\leq i < j\leq k(P)} x_i = x_j\right) \to \neg P(x_1,\ldots,x_{k(P)}).
\end{align}
A \emph{translation} of a language $\cL_1$ into a language $\cL_2$ is a mapping $I$ that takes every predicate
symbol $P(x_1,\ldots,x_k)\in\cL_1$ to an open formula $I(P)(x_1,\ldots,x_k)$ in $\cL_2$. The translation is
extended to open formulas by declaring that it commutes with logical connectives. An \emph{open
  interpretation} of a universal theory $T_1$ over a language $\cL_1$ in a universal theory $T_2$ over a
language $\cL_2$ is a translation $I$ of $\cL_1$ into $\cL_2$ such that for every axiom $\forall\vec{x},
F(\vec{x})$ of $T_1$, we have $T_2\vdash\forall\vec{x}, I(F)(\vec{x})$; we denote open interpretations as
$I\:T_1\leadsto T_2$. Since every universal theory is isomorphic to a canonical
theory~\cite[Theorem~2.3]{CR19}, all theories will be assumed to be canonical. We will also omit universal
quantifiers when stating axioms of universal theories.

Given a language $\cL$, we let $T_\cL$ be the \emph{pure canonical theory} over $\cL$, that is, the theory
whose axioms are~\eqref{eq:canonical} for each $P\in\cL$. Other important examples of canonical theories
include the theory of $k$-hypergraphs $\TkHypergraph$, whose language contains a single predicate $E$ of arity
$k(E)\df k$ and whose axioms are~\eqref{eq:canonical} for $P=E$ and
\begin{align*}
  \forall\vec{x}, (E(x_1,\ldots,x_k)\to E(x_{\sigma(1)},\ldots,x_{\sigma(k)})) & & (\sigma\in S_k);
\end{align*}
the theory of (simple) graphs $\TGraph\df\TkHypergraph[2]$, the theory of (strict) linear orders
$\TLinOrder$, whose language contains a single binary predicate $\prec$ and whose axioms are
\begin{gather*}
  \forall x, \neg(x \prec x);\\
  \forall\vec{x}, (x_1\neq x_2 \to (x_1\prec x_2 \tot \neg(x_2\prec x_1)));\\
  \forall\vec{x}, (x_1\prec x_2\land x_2\prec x_3 \to x_1\prec x_3);
\end{gather*}
and the theory of tournaments $\TTournament$, whose language contains a single binary predicate $E$ and has
the axioms
\begin{gather*}
  \forall x, \neg E(x,x);\\
  \forall x_1\forall x_2, (x_1\neq x_2 \to (E(x_1,x_2)\tot\neg E(x_2,x_1))).
\end{gather*}

Given two theories $T_1$ and $T_2$ over languages $\cL_1$ and $\cL_2$, respectively, we let $T_1\cup T_2$ be
their \emph{disjoint union}, that is, the theory in the disjoint union $\cL_1\disjcup\cL_2$ of the languages
whose axioms are those of $T_1$ (about symbols in $\cL_1$) and $T_2$ (about symbols in $\cL_2$). The two most
important types of open interpretations are the \emph{structure-erasing} interpretations, which are open
interpretations of the form $I\:T_1\leadsto T_1\cup T_2$ that act identically on the language of $T_1$ and
\emph{axiom-adding} interpretations, which are open interpretations of the form $I\:T_1\leadsto T_2$ when
$T_2$ is obtained from $T_1$ by adding axioms and $I$ acts identically on the language of $T_1$. In fact,
every open interpretation $I\:T_1\leadsto T_2$ is of the form $I = J\comp A\comp S$, where $J$ is an
isomorphism, $A$ is axiom-adding and $S$ is structure-erasing (see~\cite[Remark~2]{CR19}).

A structure $K$ on $\cL$ is called \emph{canonical} if it satisfies~\eqref{eq:canonical} for every $P\in\cL$
(equivalently, if $K$ is a model of $T_\cL$). In this case, we denote the universe of $K$ by $V(K)$ (and call
its elements \emph{vertices}), denote by $\lvert K\rvert\df\lvert V(K)\rvert$ its \emph{size} and for each
$P\in\cL$, we let $R_P(K)\df\{\alpha\in(V(K))_{k(P)} \mid K\vDash P(\alpha_1,\ldots,\alpha_{k(P)})\}$ be the
set of all (necessarily injective) tuples of vertices that satisfy $P$ in $K$. All our structures will be
assumed to be canonical unless explicitly mentioned otherwise. An \emph{embedding} of a structure $K_1$ in a
structure $K_2$ on $\cL$ is an injective function $f\:V(K_1)\rightarrowtail V(K_2)$ such that for every
$P\in\cL$ and every $\alpha\in(V(K_1))_{k(P)}$, we have
\begin{align*}
  \alpha\in R_P(K_1) & \tot f\comp\alpha \in R_P(K_2).
\end{align*}
An \emph{isomorphism} is an embedding that is also bijective and two structures $K_1$ and $K_2$ are
\emph{isomorphic} (denoted $K_1\cong K_2$) when there exists an isomorphism between them. For a structure $K$
on $\cL$, and a set $U\subseteq V(K)$, we let $K\rest_U$ be the \emph{substructure of $K$ induced by $U$},
that is, we have $V(K\rest_U)\df U$ and $R_P(K\rest_U)\df R_P(K)\cap (U)_{k(P)}$ for every $P\in\cL$.

As usual, a \emph{model} of a theory $T$ over $\cL$ is a structure on $\cL$ that satisfies all axioms of
$T$. For $n\in\NN$, we let $\cM_n[T]$ be the set of models on $n$ vertices \emph{up to isomorphism} and let
$\cM[T]\df\bigcup_{n\in\NN}\cM_n[T]$. We think of elements of $\cM_n[T]$ in terms of a representative model
$M$ with vertex set $V(M) = [n]$.

For a canonical structure $K\in\cM_n[T_\cL]$, the \emph{open diagram} $\Dopen(K)(x_1,\ldots,x_n)$ of $K$ is
the open formula
\begin{align*}
  \bigwedge_{1\leq i < j\leq n} x_i\neq x_j
  \land \bigwedge_{\substack{P\in\cL\\\alpha\in R_P(K)}} P(x_{\alpha_1},\ldots,x_{\alpha_{k(P)}})
  \land \bigwedge_{\substack{P\in\cL\\\alpha\in [n]^{k(P)}\setminus R_P(K)}} \neg P(x_{\alpha_1},\ldots,x_{\alpha_{k(P)}}).
\end{align*}

Given a family $\cF$ of models of a theory $T$, we let $\Forb_T(\cF)$ be the theory obtained from $T$ by
adding the axioms
\begin{align*}
  \forall \vec{x}, & \neg\Dopen(F)(\vec{x}) & (F\in\cF).
\end{align*}
Clearly, every canonical theory $T$ satisfies $T=\Forb_{T_\cL}(\cF)$ for $\cF\df\cM[T_\cL]\setminus\cM[T]$.

Given an open interpretation $I\:T_1\leadsto T_2$ and a model $M$ of
$T_2$, there is a naturally defined model $I(M)$ of $T_1$ given by $V(I(M))\df V(M)$ and
$R_P(I(M))\df\{\alpha\in(V(M))_{k(P)} \mid M\vDash I(P)(\alpha_1,\ldots,\alpha_{k(P)})\}$.

A canonical theory $T$ is called \emph{degenerate} if $\cM_n[T]=\varnothing$ for some $n\in\NN$ (equivalently,
if $T$ does not have an infinite model).

Given two models $M,N\in\cM[T]$ with $\lvert M\rvert\leq\lvert N\rvert$, the \emph{(unlabeled induced)
  density} of $M$ in $N$ is
\begin{align*}
  p(M,N)
  &\df
  \frac{
    \left\lvert\left\{V\in\binom{V(N)}{\lvert M\rvert} \;\middle\vert\;
    N\rest_V\cong M\right\}\right\rvert
  }{\binom{\lvert N\rvert}{\lvert M\rvert}},
\end{align*}
that is, it is the normalized number of submodels of $N$ that are isomorphic to $M$.

\subsection{The general Tur\'{a}n density and the abstract chromatic number}

In the theory of graphs $\TGraph$, we denote the \emph{complete graph} on $n$ vertices by
$K_n\in\cM_n[\TGraph]$, that is, we have $R_E(K_n)\df (V(K_n))_2$; we denote the \emph{empty graph} on $n$
vertices by $\overline{K}_n$, that is, we have $R_E(\overline{K}_n) \df \varnothing$; and we denote the
\emph{$\ell$-partite Tur\'{a}n graph} of size $n$ by $T_{n,\ell}\in\cM_n[\TGraph]$, that is, $T_{n,\ell}$ is
the complete $\ell$-partite graph with parts of sizes either $\floor{n/\ell}$ or $\ceil{n/\ell}$, or in a
formula, we have $R_E(T_{n,\ell})\df\{\alpha\in([n])_2 \mid \alpha_1\equiv\alpha_2\pmod{\ell}\}$. For graphs
$G$ and $H$, we write $G\subseteq H$ if $H$ has a \emph{non-induced copy} of $G$, that is, if there exists
$f\:V(G)\rightarrowtail V(H)$ that maps edges of $G$ to edges of $H$, or in formulas, for every $\alpha\in
R_E(G)$, we have $f\comp\alpha\in R_E(H)$. Recall that a proper coloring of a graph $G$ is a function
$f\:V(G)\to[\ell]$ such that $\forall\alpha\in R_E(G), f(\alpha_1)\neq f(\alpha_2)$ and the \emph{chromatic
  number} of $G$ is the minimum $\ell\in\NN$ such that there exists a proper coloring of $G$ of the form
$f\:V(G)\to[\ell]$.

\begin{definition}[Abstract Tur\'{a}n density]
  For an open interpretation $I\:\TGraph\leadsto T$ and $t\in\NN$, the \emph{$t$-Tur\'{a}n density} of $I$ is
  defined as
  \begin{align}\label{eq:Turandensity}
    \pi^t_I & \df \lim_{n\to\infty} \sup_{N\in\cM_n[T]} p(K_t,I(N)).
  \end{align}
\end{definition}

The existence of the limit in~\eqref{eq:Turandensity} follows from the fact that the sequence is non-increasing
(for $n\geq t$). This can be proved by the standard averaging argument of extremal combinatorics: if $T$ is
degenerate, then the sequence is eventually constant equal to $-\infty$; otherwise, if $N_0\in\cM_{n+1}[T]$
($n\geq t$) maximizes $p(K_t,I(N_0))$, then picking uniformly at random a subset $\rn{U}$ of $V(N)$ of size
$n$, we conclude that
\begin{align*}
  \sup_{N\in\cM_n[T]} p(K_t,I(N)) & \geq \EE[p(K_t,I(N_0\rest_{\rn{U}}))] = p(K_t,I(N_0))
  = \sup_{N\in\cM_{n+1}[T]} p(K_t,I(N)).
\end{align*}

Note also that since $\pi^t_I$ is stated in terms of densities, when we count copies of $K_t$
instead, we incur an $o(n^t)$ error.

\begin{definition}[Abstract chromatic number~\protect{\cite[Equation~(16)]{CR19}}]
  For an open interpretation $I\:\TGraph\leadsto T$, the \emph{abstract chromatic number of $I$} is defined
  as\footnote{The formula in~\eqref{eq:abstractchromatic} is actually a slight modification
    of~\cite[Equation~(16)]{CR19}, forcing $0$ to belong to the set. This is done so that we can also cover
    degenerate theories $T$.}
  \begin{align}\label{eq:abstractchromatic}
    \chi(I)
    & \df
    \sup\{\ell\in\NN_+ \mid \forall n\in\NN, \exists N\in\cM_n[T], T_{n,\ell}\subseteq I(N)\}\cup\{0\} + 1.
  \end{align}
\end{definition}

Note that $\chi(I)\in\NN_+\cup\{\infty\}$ because the set in~\eqref{eq:abstractchromatic} always contains
$0$. Furthermore, note that if $T$ is degenerate, then $\chi(I) = 1$ as the set
in~\eqref{eq:abstractchromatic} is $\{0\}$.

The usual Tur\'{a}n density studied in Theorem~\ref{thm:ESSAS} is $\pi^t_{I_\cF}$ for the axiom-adding
interpretation $I_\cF\:\TGraph\leadsto\Forb_{\TGraph}^+(\cF)$, where $\Forb_{\TGraph}^+(\cF)$ is the theory
obtained from $\TGraph$ by adding for each $F\in\cF$ the axiom
\begin{align*}
  \forall x_1\cdots\forall x_m,
  \neg\left(\bigwedge_{1\leq i < j\leq m} x_i\neq x_j\land\bigwedge_{\alpha\in R_E(F)} E(x_{\alpha_1},x_{\alpha_2})\right),
\end{align*}
where we rename the vertices of $F$ so that $V(F) = [m]$. We will see in Proposition~\ref{prop:usual} that in
this case $\chi(I_\cF)$ is equal to the usual chromatic number $\chi(\cF)\df\inf\{\chi(F) \mid F\in\cF\}$
except for when $\cF$ is empty or contains an empty graph; more precisely, we have $\chi(I_\cF) =
\max\{\chi(\cF),1\}$.

\subsection{Partite Ramsey numbers}
\label{subsec:prelim:Ramsey}

As we mentioned in the introduction, our alternative formula for the abstract chromatic number is based on a
partite version of Ramsey's Theorem for universal theories. The first step to this
version is identifying what are the ``uniform'' structures that are unavoidable in a large structure.
Let us start with the easier case in which all predicate symbols are symmetric: this is captured by the
theories of $\vec{k}$-hypergraphs defined below.

\begin{definition}[$\vec{k}$-hypergraphs]
  For $\vec{k} = (k_1,\ldots,k_t)\in\NN_+^t$, let $\TkHypergraph[\vec{k}]\df\bigcup_{i=1}^t
  \TkHypergraph[k_i]$, where we denote the $i$-th predicate symbol by $E_i$. A \emph{$\vec{k}$-hypergraph} is
  a model of $\TkHypergraph[\vec{k}]$. The \emph{$i$-th edge set} of a $\vec{k}$-hypergraph $H$ is the set
  $E_i(H)\df\{\im(\alpha) \mid \alpha\in R_{E_i}(H)\}$.
\end{definition}

Any ordered partition $(V_1,\ldots,V_\ell)$ of a set $V$ can be described alternatively by the function
$f\:V\to[\ell]$ such that $v\in V_{f(v)}$ for every $v\in V$. We can then classify the subsets $e\subseteq V$
according to how many points $e$ contains in each of the parts $V_i$. The notions of Ramsey patterns and
uniform $\vec{k}$-hypergraphs defined below explore this classification.

\begin{definition}[$\vec{k}$-hypergraph Ramsey patterns and uniform $\vec{k}$-hypergraphs]
  Recall that for $\ell,k\in\NN_+$, a \emph{weak composition} of $k$ of length $\ell$ is an $\ell$-tuple $q =
  (q_j)_{j=1}^\ell\in\NN^\ell$ such that $\sum_{j=1}^\ell q_j = k$. We denote the set of weak compositions of
  $k$ of length $\ell$ by $\cC_{\ell,k}$. 

  For $\ell\in\NN_+$, a \emph{$\vec{k}$-hypergraph $\ell$-Ramsey pattern} is a $t$-tuple $Q = (Q_i)_{i\in[t]}$
  such that $Q_i\subseteq\cC_{\ell,k_i}$ for every $i\in[t]$. We let $\cP_{\ell,\vec{k}}$ be the set of all
  $\vec{k}$-hypergraph $\ell$-Ramsey patterns.

  Given a $\vec{k}$-hypergraph $\ell$-Ramsey pattern $Q\in\cP_{\ell,\vec{k}}$, a $\vec{k}$-hypergraph $H$ and
  a function $f\:V(H)\to[\ell]$, we say that $H$ is \emph{$Q$-uniform with respect to $f$} if for every
  $i\in[t]$, the $E_i$-edges of $H$ are precisely those $e$ such that there exists some $q\in Q_i$ such that $e$
  contains exactly $q_j$ points in $f^{-1}(j)$, or in formulas we have
  \begin{align*}
    E_i(H)
    & =
    \left\{e\in\binom{V(H)}{k_i} \;\middle\vert\; (\lvert e\cap f^{-1}(j)\rvert)_{j\in[t]}\in Q_i\right\},
  \end{align*}
  which is in turn equivalent to
  \begin{align*}
    R_{E_i}(H) & = \{\alpha\in (V(H))_{k_i} \mid (\lvert(f\comp\alpha)^{-1}(j)\rvert)_{j\in[t]}\in Q_i\}.
  \end{align*}
\end{definition}

The partite version of Ramsey's Theorem for $\vec{k}$-hypergraphs (Theorem~\ref{thm:hypergraphRamsey} below)
says that uniform $\vec{k}$-hypergraphs cannot all be avoided as long as the parts of the partition are
sufficiently large.

\begin{definition}[Thickness and $\vec{k}$-hypergraph Ramsey numbers]
  The \emph{thickness} of a function $f\:V\to[\ell]$ is $\thk(f)\df\min\{\lvert f^{-1}(i)\rvert \mid
  i\in[\ell]\}$.

  Given $\ell\in\NN_+$ and $m\in\NN$, the \emph{$(\ell,\vec{k},m)$-Ramsey number} $R_{\ell,\vec{k}}(m)$ is
  defined as the least $n\in\NN$ such that for every $\vec{k}$-hypergraph $H$ and every $f\:V(H)\to[\ell]$
  with $\thk(f)\geq n$, there exists $Q\in\cP_{\ell,\vec{k}}$ and a set $W\subseteq V(H)$ such that
  $\thk(f\rest_W)\geq m$ and $H\rest_W$ is $Q$-uniform with respect to $f\rest_W$.
\end{definition}

\begin{theorem}\label{thm:hypergraphRamsey}
  For every $\ell\in\NN_+$, every $m\in\NN$ and every $\vec{k}\in\NN_+^t$, the $(\ell,\vec{k},m)$-Ramsey
  number $R_{\ell,\vec{k}}(m)$ is finite.
\end{theorem}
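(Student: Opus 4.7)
\emph{Proof plan.} The plan is to reduce the statement to the classical (non-partite) hypergraph Ramsey theorem via a two-stage iteration: first reduce to the single-arity case ($t = 1$), and then handle that case by iterating over the finitely many shapes in $\cC_{\ell,k}$.

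For the reduction to $t = 1$, suppose the result is already known whenever $\vec{k}$ has length one. Given $\vec{k} = (k_1,\ldots,k_t)$ and target thickness $m$, set $N_{t+1} \df m$ and recursively $N_i \df R_{\ell,(k_i)}(N_{i+1})$ for $i = t, t-1, \ldots, 1$. Starting with any $\vec{k}$-hypergraph $H$ and $f\colon V(H)\to[\ell]$ of thickness $\geq N_1$, apply the single-arity result for $E_1$ to pass to a sub-partition of thickness $\geq N_2$ on which $E_1$-edges are uniform with respect to some pattern $Q_1 \subseteq \cC_{\ell,k_1}$. Iterating for $i = 2,\ldots,t$, and using that uniformity with respect to a previously-chosen pattern is automatically preserved under restriction to a sub-partition, one arrives at a sub-partition of thickness $\geq m$ uniform for every edge type simultaneously, with combined pattern $Q = (Q_i)_{i\in[t]}$.

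For the single-arity case with arity $k$, the statement amounts to the following partite Ramsey theorem: given any $2$-coloring of $\binom{V}{k}$ with $V = V_1 \disjcup \cdots \disjcup V_\ell$ and each $V_j$ sufficiently large, there exist $V'_j \subseteq V_j$ of size $\geq m$ such that the color of any $k$-subset $e$ of $V'_1 \disjcup \cdots \disjcup V'_\ell$ depends only on its shape $q = (\lvert e\cap V'_j\rvert)_{j\in[\ell]} \in \cC_{\ell,k}$. I would establish this by iterating over the finite set $\cC_{\ell,k}$: for each shape $q$ in turn, the shape-$q$ subsets are in natural bijection with $\prod_{j\in[\ell]}\binom{V_j}{q_j}$, so making them all monochromatic within some sub-partition is a product Ramsey statement, provable by induction on $\lvert\{j : q_j > 0\}\rvert$. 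The base case of a single active part reduces to the classical $k$-uniform hypergraph Ramsey theorem applied inside that part; the inductive step uses the standard trick of fixing one factor and invoking Ramsey on the coloring this induces on the remaining product. Handling shapes one at a time is sound because monochromaticity on previously-handled shapes is inherited by any sub-partition.

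The main obstacle is purely bookkeeping: one accumulates a tower of Ramsey numbers across the $t$ arities, the $\lvert\cC_{\ell,k_i}\rvert$ shapes per arity, and the nested product-Ramsey induction. None of this presents a genuine difficulty once finiteness of classical Ramsey numbers is granted, provided one checks at every shrinking step that uniformity already achieved is preserved — which is immediate since $Q$-uniformity is monotone under restriction to sub-partitions.
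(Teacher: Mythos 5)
Your proposal is correct, and your reduction from general $\vec{k}$ to length-one tuples via the chain $N_{t+1} = m$, $N_i = R_{\ell,(k_i)}(N_{i+1})$ is just an unrolled version of the paper's induction on $t$; no substantive difference there. The genuine divergence is in how you prove the single-arity case $t=1$. You process the shapes $q\in\cC_{\ell,k}$ one at a time, invoking (and in effect reproving) the product Ramsey theorem for each shape via an inner induction on the number of active parts $\lvert\{j : q_j>0\}\rvert$; soundness rests on the observation that uniformity already achieved for earlier shapes survives further shrinking, which is correct. The paper instead dispatches all shapes in a single stroke: with $n = R(k, 2^{\lvert\cC_{\ell,k}\rvert}, \ell m)$ vertices per part, it colors each $A\in\binom{[n]}{k}$ by the set $g(A) = \{q\in\cC_{\ell,k} : A_q\in E(H)\}$, where $A_q$ is a ``staircase'' $k$-subset assembled across parts according to $q$ (eq.~\eqref{eq:Aq}), then applies classical Ramsey once and distributes the resulting monochromatic index set $U$ of size $\ell m$ into $\ell$ blocks of $m$ to form $W$. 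That encoding buys a one-shot argument and far better bounds (a single Ramsey number rather than a nested tower over shapes and active-part counts), at the cost of the clever $A_q$ device; your route is more modular and leans on a standard citable theorem, but yields a substantially worse quantitative dependence and requires carrying the product-Ramsey machinery along.
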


Theorem~\ref{thm:hypergraphRamsey} above can be obtained e.g.\ by repeatedly applying~\cite[Theorem~5 of
  Section~5]{GRS90}, but we provide a proof via a reduction to Ramsey's original theorem for hypergraphs in
Section~\ref{sec:Ramsey}.

For the case of general universal theories, we have an extra technicality: predicate symbols are not
necessarily symmetric. The correct way of addressing this issue is illustrated by the case of the theory of
tournaments $\TTournament$. The unavoidable ``uniform'' models here are the transitive tournaments $\Tr_n$
(with $R_E(\Tr_n)\df\{\alpha\in ([n])_2 \mid \alpha_1 < \alpha_2\}$): for every $k\in\NN$, every sufficiently
large tournament $M$ must contain a transitive tournament of size $k$ as a
subtournament~\cite{Ste59,EM64}. Another way of seeing a transitive tournament is that there is an underlying
order $\leq$ of its vertices such that we can decide whether $\alpha\in([n])_2$ is in $R_E(\Tr_n)$ based only
on the relative order of $\alpha_1$ and $\alpha_2$ with respect to $\leq$. In the $\ell$-partite case, the
role of the order $\leq$ is played by the $\ell$-split orders defined below, which are tuples $(f,\preceq)$
such that $f\:V\to[\ell]$ encodes an $\ell$-partition and $\preceq$ orders each of the parts of this
partition.

\begin{definition}[Split orders]
  For $\ell\in\NN_+$ and a set $V$, an \emph{$\ell$-split order} over $V$ is a pair $(f,{\preceq})$, where
  $f\:V\to[\ell]$ and $\preceq$ is a partial order on $V$ such that
  \begin{align*}
    \forall v,w\in V, (f(v) = f(w) & \tot v\preceq w\lor w\preceq v),
  \end{align*}
  that is, two elements of $V$ are comparable under $\preceq$ if and only if they have the same image under
  $f$. We let $\cS_{\ell,V}$ be the set of all $\ell$-split orders over $V$ and for $k\in\NN$, we use the
  shorthand $\cS_{\ell,k}\df\cS_{\ell,[k]}$.

  When $\ell=1$, we will typically omit $f$ from the notation as it must be the constant function; with this
  abuse, we will think of $\cS_{1,V}$ as the set of all total orders on $V$.

  For a partial order $\preceq$ on a set $V$ and an injective function $g\:W\rightarrowtail V$, we let
  $\preceq_g$ be the partial order on $W$ defined by
  \begin{align*}
    w_1 \preceq_g w_2 & \iff g(w_1)\preceq g(w_2).
  \end{align*}
  If $W\subseteq V$, then we let ${\preceq_W}\df{\preceq_{\iota_W}}$, where $\iota_W\:W\rightarrowtail V$ is
  the canonical injection, that is, $\preceq_W$ is just the restriction ${\preceq}\cap (W\times W)$ of
  $\preceq$ to $W$.
\end{definition}

Note that for $g\:W\to V$ and $h\:U\to W$ and for a partial order $\preceq$ on $V$, we have ${(\preceq_g)_h} =
{\preceq_{g\comp h}}$. Furthermore, if $(f,{\preceq})\in\cS_{\ell,V}$, then $(f\comp
g,{\preceq_g})\in\cS_{\ell,W}$. Finally, note that there are finitely many $\ell$-split orders over $[k]$.

Given an $\ell$-split order $(f,{\preceq})\in\cS_{\ell,V}$ over $V$, we can classify the tuples $\alpha\in
(V)_k$ according to $(f\comp\alpha,{\preceq_\alpha})$, that is, $f\comp\alpha$ captures the values of $f$ on
the image of $\alpha$ and $\preceq_\alpha$ captures the partial order induced by $\preceq$ on the image of
$\alpha$. Just as in the case of $\vec{k}$-hypergraphs, the notions of Ramsey patterns, uniform structures and
Ramsey numbers defined below explore this classification.

\begin{definition}[Ramsey patterns, uniform structures and Ramsey number]
  Fix $\ell\in\NN_+$ and a language $\cL$. An \emph{$\ell$-Ramsey pattern on $\cL$} is a function $Q$ that
  maps each predicate symbol $P\in\cL$ to a collection $Q_P\subseteq\cS_{\ell,k(P)}$ of $\ell$-split orders on
  $[k(P)]$. We let $\cP_{\ell,\cL}$ be the set of all $\ell$-Ramsey patterns on $\cL$.

  Given an $\ell$-Ramsey pattern $Q\in\cP_{\ell,\cL}$ on $\cL$, a canonical structure $M$ on $\cL$ and an
  $\ell$-split order $(f,{\preceq})\in\cS_{\ell,V(M)}$ on $V(M)$, we say that $M$ is \emph{$Q$-uniform with
    respect to $(f,{\preceq})$} if for every $P\in\cL$, we have
  \begin{align*}
    R_P(M) & = \{\alpha\in (V(M))_{k(P)} \mid (f\comp\alpha, {\preceq_\alpha})\in Q_P\}.
  \end{align*}

  For a canonical structure $M$ on $\cL$, the \emph{$\ell$-Ramsey uniformity set} of $M$ is the set
  $\cU_\ell(M)$ of all $\ell$-Ramsey patterns $Q\in\cP_{\ell,\cL}$ such that $M$ is $Q$-uniform with respect
  to some $(f,{\preceq})\in\cS_{\ell,V(M)}$. We extend this definition to a family $\cF$ of canonical
  structures as $\cU_\ell(\cF)\df\bigcup_{M\in\cF}\cU_\ell(M)$.

  Given a canonical theory $T$ over $\cL$ and $m\in\NN$, the \emph{$(\ell,T,m)$-Ramsey number} $R_{\ell,T}(m)$
  is defined as the least $n\in\NN$ such that for every model $M$ of $T$ and every $\ell$-split order
  $(f,{\preceq})\in\cS_{\ell,V(M)}$ on $V(M)$ with $\thk(f)\geq n$, there exists an $\ell$-Ramsey pattern
  $Q\in\cP_{\ell,\cL}$ over $\cL$ and a set $W\subseteq V(M)$ such that $\thk(f\rest_W)\geq m$ and $M\rest_W$
  is $Q$-uniform with respect to $(f\rest_W, {\preceq_W})$.
\end{definition}

Note that since $\cL$ is finite, there are only finitely many $\ell$-Ramsey patterns on $\cL$. Note also that
the definition of $R_{\ell,T}(m)$ is strong in the sense that \emph{every} $\ell$-split order of $V(M)$ is
required to yield a uniform submodel. This is slightly stronger than our motivating example of tournaments:
our definition for $\TTournament$ with $\ell=1$ requires that every ordering $\leq$ of the vertices of $M$
yields a tournament of size $m$ whose edges either all match the order $\leq$ or all disagree with $\leq$.

\begin{example}
  In the language $\cL$ containing a single predicate symbol $E$ of arity $k(E) = 2$, for every $n\geq 2$,
  there are exactly three (up to isomorphism) canonical structures $M$ of size $n$ that are $Q$-uniform for
  some $1$-Ramsey pattern $Q\in\cP_{1,\cL}$ with respect to some $(f,{\preceq})\in\cS_{1,V(M)}$: the complete
  graph $K_n$, the empty graph $\overline{K}_n$ and the transitive tournament $\Tr_n$. Note also that for
  $n\geq 2$, both $\cU_1(K_n)$ and $\cU_1(\overline{K}_n)$ have a single element but $\cU_1(\Tr_n)$ has two
  elements.

  In the same language, canonical structures $M$ that are $Q$-uniform for some $\ell$-Ramsey pattern $Q$ with
  respect to some $(f,{\preceq})$ are precisely those in which each level set $f^{-1}(i)$ of $f$ induces
  either a complete graph $K_{\lvert f^{-1}(i)\rvert}$, an empty graph $\overline{K}_{\lvert f^{-1}(i)\rvert}$
  or a transitive tournament $\Tr_{\lvert f^{-1}(i)\rvert}$ and (directed) edges between $v,w\in V(M)$ in
  different level sets of $f$ are completely determined by $f(v)$ and $f(w)$. See Figure~\ref{fig:uniform}.
\end{example}

\begin{figure}[htb]
  \begin{center}
    \begingroup
\def\sep{4}
\def\smallradius{0.9cm}
\def\radius{1cm}
\def\shift{0.3}
\def\lwidth{0.2cm}

\begin{tikzpicture}
  \coordinate (A) at (0,0);
  \coordinate (B) at (\sep,0);
  \coordinate (C) at (\sep,-\sep);
  \coordinate (D) at (0,-\sep);

  \draw[shorten <=\smallradius, shorten >=\radius, arrows={-latex}, line width=\lwidth] (A) -- (B);
  \draw[shorten <=\smallradius, shorten >=\radius, arrows={-latex}, line width=\lwidth] (B) -- (C);
  \draw[shorten <=\smallradius, shorten >=\radius, arrows={-latex}, line width=\lwidth]
  ($(C) + (0,\shift)$) -- ($(D) + (0,\shift)$);
  \draw[shorten <=\smallradius, shorten >=\radius, arrows={-latex}, line width=\lwidth]
  ($(D) + (0,-\shift)$) -- ($(C) + (0,-\shift)$);
  \draw[shorten <=\smallradius, shorten >=\radius, arrows={-latex}, line width=\lwidth] (A) -- (C);
  \draw[shorten <=\smallradius, shorten >=\radius, arrows={-latex}, line width=\lwidth]
  ($(A) + (\shift,0)$) -- ($(D) + (\shift,0)$);
  \draw[shorten <=\smallradius, shorten >=\radius, arrows={-latex}, line width=\lwidth]
  ($(D) + (-\shift,0)$) -- ($(A) + (-\shift,0)$);

  \draw[fill=white] (A) circle (\radius);
  \draw[fill=white] (B) circle (\radius);
  \draw[fill=white] (C) circle (\radius);
  \draw[fill=white] (D) circle (\radius);

  \node at (A) {$K_{\lvert f^{-1}(1)\rvert}$};
  \node at (B) {$\overline{K}_{\lvert f^{-1}(2)\rvert}$};
  \node at (C) {$\Tr_{\lvert f^{-1}(3)\rvert}$};
  \node at (D) {$\Tr_{\lvert f^{-1}(4)\rvert}$};
\end{tikzpicture}

\endgroup
    \captionsetup{singlelinecheck=off}
    \caption[.]{Pictorial view of a $Q$-uniform model for the Ramsey pattern $Q\in\cP_{4,\{E\}}$ ($k(E) = 2$)
      given by
      \begin{align*}
        Q_E \df \{
        & ((1,1),{\leq}),((1,1),{\geq}),((3,3),{\leq}),((4,4),{\geq}),\\
        & ((1,2),{\preceq_0}), ((1,3),{\preceq_0}),((1,4),{\preceq_0}),\\
        & ((2,3),{\preceq_0}), ((3,4),{\preceq_0}),((4,3),{\preceq_0}),((4,1),{\preceq_0})\},
      \end{align*}
      where $\leq$ is the usual order on $[2]$, $\geq$ is its reverse and $\preceq_0$ is the trivial partial
      order on $[2]$, and the functions $f\:[2]\to[4]$ are represented as $(f(1),f(2))$. An arrow from a part
      $A$ to a part $B$ in the figure means that $(a,b)\in R_E(M)$ for every $a\in A$ and every $b\in B$.}
    \label{fig:uniform}
  \end{center}
\end{figure}

\begin{theorem}\label{thm:Ramsey}
  For every $\ell\in\NN_+$, every $m\in\NN$ and every canonical theory $T$, the $(\ell,T,m)$-Ramsey number
  $R_{\ell,T}(m)$ is finite.
\end{theorem}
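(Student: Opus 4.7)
The plan is to reduce Theorem~\ref{thm:Ramsey} to the symmetric $\vec{k}$-hypergraph version (Theorem~\ref{thm:hypergraphRamsey}) by using the split order $(f,{\preceq})$ to canonically label each $k(P)$-subset of $V(M)$ and thereby encode ``which tuples satisfy $P$'' as a single bounded-size color per subset. The key observation is that once $(f\rest_e,{\preceq_e})$ is used to sort $e\in\binom{V(M)}{k(P)}$ (first by $f$-value using the natural order of $[\ell]$, then by $\preceq$ within each part), we obtain a canonical bijection $\phi_e\:[k(P)]\to e$, and pulling back tuples with image $e$ through $\phi_e$ converts them to permutations in $S_{k(P)}$; the set $R_P(M)\cap(e)_{k(P)}$ is thus encoded as a subset of $S_{k(P)}$, taking only $2^{k(P)!}$ possible values.

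In detail, fix $\ell\in\NN_+$, $m\in\NN$, and a canonical theory $T$ over $\cL=\{P_1,\ldots,P_t\}$ with arities $k_i\df k(P_i)$, and set $N_i\df 2^{k_i!}$. Given any canonical structure $M$ on $\cL$ and any $(f,{\preceq})\in\cS_{\ell,V(M)}$, for each $i\in[t]$ and each $e\in\binom{V(M)}{k_i}$ define
\[
  c_i(e)\df\{\phi_e^{-1}\comp\alpha \mid \alpha\in R_{P_i}(M)\cap(e)_{k_i}\}\subseteq S_{k_i}.
\]
Assemble these colorings into a single $\vec{k}'$-hypergraph $H$ on $V(M)$, where $\vec{k}'$ contains $N_i$ copies of $k_i$ (one per possible color for predicate $P_i$): we place $e\in\binom{V(M)}{k_i}$ in the edge set of $H$ indexed by the color $c_i(e)$. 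Setting $n\df R_{\ell,\vec{k}'}(m)$ (finite by Theorem~\ref{thm:hypergraphRamsey}), whenever $\thk(f)\geq n$ there exist $W\subseteq V(M)$ with $\thk(f\rest_W)\geq m$ and a pattern $Q'\in\cP_{\ell,\vec{k}'}$ such that $H\rest_W$ is $Q'$-uniform with respect to $f\rest_W$; consequently, $c_i(e)$ depends only on the composition $q(e)\df(\lvert e\cap f^{-1}(j)\rvert)_{j\in[\ell]}$ of $e\in\binom{W}{k_i}$, and we denote this common color by $C_{i,q}\subseteq S_{k_i}$.

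To translate this into $Q$-uniformity of $M\rest_W$ for a suitable $Q\in\cP_{\ell,\cL}$, let $(f_q,{\preceq_q})\in\cS_{\ell,k_i}$ be the canonical split order on $[k_i]$ of composition $q\in\cC_{\ell,k_i}$; by construction of $\phi_e$, whenever $e$ has composition $q$ we have $f\comp\phi_e = f_q$ and ${\preceq_{\phi_e}} = {\preceq_q}$. Define
\[
  Q_{P_i} \df \bigcup_{q\in\cC_{\ell,k_i}} \{(f_q\comp\sigma,(\preceq_q)_\sigma) \mid \sigma\in C_{i,q}\}.
\]
For any $\alpha\in(W)_{k_i}$ with $e\df\im(\alpha)$, $q\df q(e)$, and $\sigma\df\phi_e^{-1}\comp\alpha\in S_{k_i}$, we have $\alpha\in R_{P_i}(M)\iff\sigma\in c_i(e) = C_{i,q}$, while $(f\comp\alpha,{\preceq_\alpha}) = (f_q\comp\sigma,(\preceq_q)_\sigma)$, so $M\rest_W$ is $Q$-uniform with respect to $(f\rest_W,{\preceq_W})$, showing $R_{\ell,T}(m)\leq R_{\ell,\vec{k}'}(m)<\infty$.

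The main technical obstacle is verifying that the map $\sigma\mapsto(f_q\comp\sigma,(\preceq_q)_\sigma)$ from $S_{k_i}$ into $\cS_{\ell,k_i}$ is injective, since otherwise the displayed definition of $Q_{P_i}$ could conflate $\sigma\in C_{i,q}$ with $\sigma\notin C_{i,q}$ and break the backwards implication above. This injectivity holds because if two permutations $\sigma_1,\sigma_2$ have the same image, then $\pi\df\sigma_2\comp\sigma_1^{-1}$ is a fiber-preserving order-automorphism of $([k_i],{\preceq_q})$; since $\preceq_q$ totally orders each $f_q$-fiber, $\pi$ must be the identity, forcing $\sigma_1=\sigma_2$.
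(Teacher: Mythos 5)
Your proof is correct, and it takes essentially the same overall route as the paper: reduce to the symmetric $\vec{k}$-hypergraph version (Theorem~\ref{thm:hypergraphRamsey}) by using the split order to canonically orient each $k(P)$-subset, thereby encoding ``which tuples with image $e$ satisfy $P$'' as hypergraph information, and then translating the resulting uniform $\vec{k}'$-hypergraph pattern back into an $\ell$-Ramsey pattern on $\cL$. The injectivity check at the end is exactly the needed verification and is argued correctly.

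The one difference worth noting is the encoding scheme. The paper uses one auxiliary hypergraph relation per pair $(P,\leq^i)$ of a predicate and a total order on $[k(P)]$, so $\sum_P k(P)!$ relations, each a simple yes/no question (``is the tuple that enumerates $A$ in $\leq^i$-order in $R_P(M)$?''); the auxiliary hypergraph then directly re-expresses $R_P(M)$ via the total order $\leq$ extending $\preceq$. You instead bundle all this information into a single rich color $c_i(e)\subseteq S_{k_i}$ per subset, and pay for it by using $\sum_i 2^{k_i!}$ relations; a subset's membership in an edge set encodes the entire type of $e$ at once. Both reductions are legitimate; the paper's is more economical in the size of $\vec{k}$ (polynomial vs. exponential in $k(P)!$), while yours is arguably more transparent since uniformity of the auxiliary hypergraph translates verbatim into ``the color depends only on the composition,'' which is the classical Ramsey-theoretic picture. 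One cosmetic gap: for compositions $q$ not realized in $\binom{W}{k_i}$ (e.g.\ if $m<\max_j q_j$), your $C_{i,q}$ is not defined; you should either set it arbitrarily or restrict the union defining $Q_{P_i}$ to realized compositions --- this does not affect the conclusion since the uniformity condition is vacuous for unrealized tuples.
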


We provide a proof of Theorem~\ref{thm:Ramsey} via a reduction to Theorem~\ref{thm:hypergraphRamsey} in
Section~\ref{sec:Ramsey}. Let us also note that the case $\ell=1$ of Theorem~\ref{thm:Ramsey} follows from the
very general Ramsey Theory for systems of~\cite{NR89}.

We will typically be working in theories of the form $\TGraph\cup T$ and two types of Ramsey patterns will
play an important role in the alternative formula for the abstract chromatic number.

\begin{definition}[Complete patterns and Tur\'{a}n patterns]
  Fix $\ell\in\NN_+$ and a language $\cL$ and let $E\in\cL$ be a binary predicate symbol.

  A $1$-Ramsey pattern $Q\in\cP_{1,\cL}$ on $\cL$ is called \emph{$E$-complete} if $Q_E = \cS_{1,2}$. We let
  $\cC_\cL^E$ be the set of all $E$-complete $1$-Ramsey patterns on $\cL$.

  An $\ell$-Ramsey pattern $Q\in\cP_{\ell,\cL}$ on $\cL$ is called \emph{$E$-Tur\'{a}n} if
  \begin{align*}
    Q_E & = \{(g,{\preceq})\in\cS_{\ell,2} \mid g\text{ is injective}\}.
  \end{align*}
  We let $\cT_{\ell,\cL}^E$ be the set of all $E$-Tur\'{a}n $\ell$-Ramsey patterns on $\cL$.
\end{definition}

Note that if $I\:T_{\{E\}}\leadsto T_\cL$ is the structure-erasing interpretation, then $M$ is $Q$-uniform
with respect to some $(f,{\preceq})\in\cS_{1,V(M)}$ for some $E$-complete $Q\in\cC_\cL^E$ if and only if
$I(M)\cong K_{\lvert M\rvert}$. Analogously, $M$ is $Q$-uniform with respect to some
$(f,{\preceq})\in\cS_{\ell,V(M)}$ for some $E$-Tur\'{a}n $Q\in\cT_{\ell,\cL}^E$ if and only if $I(M)$ is a
complete $\ell$-partite graph with respect to the partition given by the level sets of $f$.

\subsection{Non-induced setting}

As we mentioned in the introduction, the abstract chromatic number works in the general setting of induced
submodels. For the non-induced setting, we will be able to provide a slightly simpler formula for the abstract
chromatic number in terms of proper split orderings defined below.

\begin{definition}[$E$-upward closures and proper split orderings]
  Let $\cL$ be a language and let $E$ be the predicate symbol corresponding to $\TGraph$ in the language
  $\cL\cup\{E\}$ of $\TGraph\cup T_\cL$.

  Given a family $\cF$ of models of $\TGraph\cup T_\cL$, the \emph{$E$-upward closure} of $\cF$ is the family
  $\cF\up^E$ of all $F'$ that can be obtained from some $F\in\cF$ by possibly adding edges, that is, all
  models $F'$ of $\TGraph\cup T_\cL$ such that there exists $F\in\cF$ with
  \begin{align*}
    V(F') & = V(F); &
    R_E(F') & \supseteq R_E(F); &
    R_P(F') & = R_P(F) \quad (P\in\cL).
  \end{align*}

  Let $I\:\TGraph\leadsto\TGraph\cup T_\cL$ and $J\:T\leadsto\TGraph\cup T_\cL$ be the structure-erasing
  interpretations. Given $\ell\in\NN_+$, an $\ell$-Ramsey pattern $Q\in\cP_{\ell,\cL}$ on $\cL$ and a model
  $M$ of $\TGraph\cup T_\cL$, an \emph{$E$-proper $Q$-split ordering} of $M$ is a split order
  $(f,{\preceq})\in\cS_{\ell,V(M)}$ such that $J(M)$ is $Q$-uniform with respect to $(f,{\preceq})$ and $f$ is
  a proper coloring of the graph $I(M)$. The \emph{$E$-proper $\ell$-split ordering set} of $M$ is the set
  $\chi_\ell^E(M)$ of all $\ell$-Ramsey patterns $Q\in\cP_{\ell,\cL}$ such that $M$ has an $E$-proper
  $Q$-split ordering. We extend this definition to a family $\cF$ of canonical structures as
  $\chi_\ell^E(\cF)\df\bigcup_{M\in\cF}\chi_\ell^E(M)$.
\end{definition}

Note that in the definition of proper $Q$-split orderings, the predicate symbol $E$ is excluded from the
uniformity condition. Note also that if the language $\cL$ is empty, then $\cP_{\ell,\cL}$ has a
unique element $Q$ and a proper $(Q,\ell)$-split ordering of $M$ consists of any $\ell$-split order
$(f,{\preceq})$ in which $f$ is a proper coloring of the graph $I(M)$.

\section{Main results}
\label{sec:main}

In this section we formalize the main results. We start with the generalization of Theorem~\ref{thm:ESSAS} to
the setting of open interpretations. The case when $t=2$ and $T$ is non-degenerate was done
in~\cite[Example~31]{CR19}.

\begin{theorem}\label{thm:abstractchromatic}
  Let $t\in\NN_+$ and let $I\:\TGraph\leadsto T$ be an open interpretation. Then
  \begin{align}\label{eq:Turandensityabstractchromatic}
    \pi^t_I
    & =
    \begin{dcases*}
      \prod_{i=1}^{t-1} \left(1 - \frac{j}{\chi(I)-1}\right), & if $\chi(I)\geq 2$;\\
      -\infty, & if $\chi(I)\leq 1$.
    \end{dcases*}
  \end{align}
\end{theorem}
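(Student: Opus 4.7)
The plan is to reduce Theorem \ref{thm:abstractchromatic} to the classical Alon--Shikhelman bound (Theorem \ref{thm:ESSAS}) by translating the abstract definition of $\chi(I)$ into a bona fide forbidden-subgraph hypothesis on $I(N)$. I would split into the lower bound, the upper bound, and the boundary cases $\chi(I)\in\{1,\infty\}$.

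\emph{Lower bound.} Assume $\chi(I)\geq 2$ and set $c\df\chi(I)-1\in\NN_+\cup\{\infty\}$. For each finite $\ell\leq c$ and each $n$, the definition of $\chi(I)$ produces some $N\in\cM_n[T]$ with $T_{n,\ell}\subseteq I(N)$ (non-induced). Since non-induced containment can only increase the count of $K_t$, a direct enumeration of $t$-cliques in the Tur\'an graph gives $p(K_t,I(N))\geq p(K_t,T_{n,\ell})=\prod_{i=1}^{t-1}(1-i/\ell)+o(1)$. Take $\ell=c$ when $c$ is finite, or let $\ell\to\infty$ when $c=\infty$, to obtain $\pi^t_I\geq\prod_{i=1}^{t-1}(1-i/(\chi(I)-1))$.

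\emph{Upper bound for finite $c=\chi(I)-1\geq 1$.} By definition, $c+1$ fails the defining condition, so there exists $n_0\in\NN$ such that no $N'\in\cM_{n_0}[T]$ satisfies $T_{n_0,c+1}\subseteq I(N')$. The key step is to promote this to: for \emph{every} $N\in\cM[T]$, the graph $I(N)$ is $T_{n_0,c+1}$-free in the non-induced sense. This uses the fact that open interpretations commute with induced substructures, i.e.\ $I(N\rest_V)=I(N)\rest_V$ for any $V\subseteq V(N)$, which follows directly from $I(E)$ being an open formula whose evaluation depends only on the restriction of $N$ to the relevant tuple. Since a non-induced copy of $T_{n_0,c+1}$ in $I(N)$ lives on some $V\subseteq V(N)$ of size $n_0$, and $N\rest_V\in\cM_{n_0}[T]$ by hereditariness, such a copy would contradict the choice of $n_0$. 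Now $\chi(T_{n_0,c+1})=c+1$, so Theorem \ref{thm:ESSAS} applied to $\cF=\{T_{n_0,c+1}\}$ bounds the number of $t$-cliques in $I(N)$ by $\prod_{i=1}^{t-1}(1-i/c)\binom{n}{t}+o(n^t)$, giving the upper bound.

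\emph{Boundary cases.} If $\chi(I)=\infty$, then $\pi^t_I\leq 1$ trivially matches the formula (the product equals $1$). If $\chi(I)=1$, I would argue that $T$ must be degenerate: taking $\ell=1$ in the defining set, $T_{n,1}=\overline{K}_n$ embeds non-inducedly into every graph on $\geq n$ vertices, so the condition ``$\forall n, \exists N\in\cM_n[T], T_{n,1}\subseteq I(N)$'' can fail only if some $\cM_n[T]$ is empty. Hence the supremum in \eqref{eq:Turandensity} is over an empty set for large $n$, so $\pi^t_I=-\infty$.

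The main obstacle is the upper-bound reduction: one has to convert the purely local obstruction encoded by $\chi(I)$ (namely absence of $T_{n_0,c+1}$ in the bounded-size models of $T$) into a global non-induced forbidden subgraph statement about $I(N)$ for unboundedly large $N$. This is precisely where the combination of openness of $I$ and hereditariness of $\cM[T]$ is indispensable; once established, the rest is a direct invocation of the classical Erd\H{o}s--Stone--Simonovits/Alon--Shikhelman theorem and careful bookkeeping on $\binom{n}{t}$ versus $n^t$ error terms.
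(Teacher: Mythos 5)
Your proof is correct and follows essentially the same route as the paper's: both use the Tur\'an graph for the lower bound, Alon--Shikhelman plus heredity of $\cM[T]$ and the compatibility $I(N\rest_V)=I(N)\rest_V$ for the upper bound, and identical handling of $\chi(I)\in\{1,\infty\}$. The only difference is cosmetic: you fix a single $n_0$ so that $T_{n_0,\chi(I)}$ is globally forbidden from $\{I(N)\}$ and apply Alon--Shikhelman directly, whereas the paper phrases the same argument as a contradiction (if $\pi^t_I$ were too large, $T_{n,\chi(I)}$ would appear for every $n$, forcing $\chi(I)$ into its own defining set).
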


The next theorem gives an alternative formula for the abstract chromatic number based on the Ramsey uniformity
sets of the forbidden models.

\begin{theorem}\label{thm:Ramseyabstractchromatic}
  Let $I\:\TGraph\leadsto T$ be an open interpretation and let $T'$ be the theory obtained from $\TGraph\cup
  T$ by adding the axiom
  \begin{align*}
    \forall x\forall y, E(x,y) & \tot I(E)(x,y).
  \end{align*}
  Let $\cL$ be the language of $T'$ and let $\cF$ be such that $T' = \Forb_{T_\cL}(\cF)$. Then
  \begin{align}\label{eq:Ramseyabstractchromaticmin}
    \chi(I)
    & =
    \begin{dcases*}    
      \infty, & if $\cC_\cL^E\not\subseteq\cU_1(\cF)$;\\
      \min\{\ell\in\NN_+ \mid\cT_{\ell,\cL}^E\subseteq\cU_\ell(\cF)\}, & otherwise.
    \end{dcases*}
  \end{align}

  Furthermore, if $T$ is itself obtained from $\TGraph\cup T$ by adding axioms and $I$ acts identically on
  $E$, then the same result holds by taking $T' = T$ instead.
\end{theorem}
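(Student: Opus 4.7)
The plan is to work in the language $\cL$ of $T'$ throughout, using the bi-implication axiom $E\tot I(E)$ to identify models of $T$ with models of $T'$, so that the $E$-graph of $N'\in\cM_n[T']$ equals $I(N)$ for the corresponding $N\in\cM_n[T]$. I will establish three implications: (L) $\cT^E_{\ell,\cL}\not\subseteq\cU_\ell(\cF)\Rightarrow\chi(I)>\ell$; (U) $(\cT^E_{\ell,\cL}\subseteq\cU_\ell(\cF))\land(\cC_\cL^E\subseteq\cU_1(\cF))\Rightarrow\chi(I)\leq\ell$; and (I) $\cC_\cL^E\not\subseteq\cU_1(\cF)\Rightarrow\chi(I)=\infty$. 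These combine to give the formula: in the first case of the statement, (I) directly gives $\chi(I)=\infty$; in the second case, a separate Ramsey argument (the ``clique subcase'' of (U)'s analysis, applied with $\ell=1$ to rule out models with complete $E$-graph) shows $\cC_\cL^E\subseteq\cU_1(\cF)\Rightarrow\chi(I)<\infty$, then the contrapositive of (L) places $\chi(I)$ in the set $\{\ell\in\NN_+\mid\cT^E_{\ell,\cL}\subseteq\cU_\ell(\cF)\}$ (so its min $\ell^*$ exists with $\ell^*\leq\chi(I)$), and (U) at $\ell=\ell^*$ gives $\chi(I)\leq\ell^*$, yielding $\chi(I)=\ell^*$.

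Both (L) and (I) are lower-bound arguments via the same construction: given a pattern $Q$ missing from the relevant uniformity set, for each $n$ I build the $Q$-uniform canonical structure $M$ on $[n]$ with respect to any compatible split order (a Tur\'{a}n-balanced $f\:[n]\to[\ell]$ for (L), the constant function for (I)). Since induced substructures of $Q$-uniform structures are themselves $Q$-uniform with respect to the restricted split order, and no $F\in\cF$ is $Q$-uniform, $M$ avoids $\cF$ and is therefore a model of $T'=\Forb_{T_\cL}(\cF)$. The $E$-Tur\'{a}n (resp.\ $E$-complete) property of $Q$ forces the $E$-graph of $M$ to be exactly $T_{n,\ell}$ (resp.\ $K_n$), so $I(M\rest_{\cL(T)})$ contains $T_{n,\ell}$ (resp.\ $T_{n,\ell'}$ for every $\ell'$), giving the lower bound $\chi(I)>\ell$ (resp.\ $\chi(I)=\infty$).

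The main technical step is (U), which uses Theorem~\ref{thm:Ramsey}. Let $m$ exceed the sizes of forbidden realizers for each pattern in the finite set $\cC_\cL^E\cup\cT^E_{\ell,\cL}$ and set $n_0=\ell\cdot R_{\ell,T_\cL}(m)$. Suppose toward contradiction some $N'\in\cM_{n_0}[T']$ contains $T_{n_0,\ell}$ in its $E$-graph, witnessed by a Tur\'{a}n-balanced $f\:V(N')\to[\ell]$ of thickness $R_{\ell,T_\cL}(m)$ with every cross-part pair an edge. Extending $f$ to any compatible split order $\preceq$ and applying Theorem~\ref{thm:Ramsey} to $N'$ (as a model of $T_\cL$), I obtain $W'\subseteq V(N')$ with $\thk(f\rest_{W'})\geq m$ such that $N'\rest_{W'}$ is $Q$-uniform with respect to $(f\rest_{W'},\preceq_{W'})$ for some $Q\in\cP_{\ell,\cL}$. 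The cross-part edge condition forces $Q_E$ to contain every injective split order, and the symmetry axiom of $\TGraph$ reduces the within-part content of $Q_E$ to an all-or-nothing dichotomy per part (clique or independent set). If every part of $N'\rest_{W'}$ is independent, then $Q\in\cT^E_{\ell,\cL}\subseteq\cU_\ell(\cF)$ yields a realizer $F\in\cF$ of size $\leq m$; a partition- and order-preserving injection embeds $F$ into $N'\rest_{W'}$ as an induced substructure, contradicting $N'\vDash T'$. If some part $i$ is a clique, the $1$-split restriction $N'\rest_{W'\cap f^{-1}(i)}$ is $Q'$-uniform for a pattern $Q'$ with $Q'_E=\cS_{1,2}$, so $Q'\in\cC_\cL^E\subseteq\cU_1(\cF)$, and an analogous embedding yields the same contradiction. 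The main obstacle is precisely this case analysis: the Ramsey-uniform $Q$ need not be $E$-Tur\'{a}n, but the $\TGraph$ symmetry axiom aligns the within-part structure into the clique/independent-set dichotomy matched exactly to the two hypotheses on $\cT^E_{\ell,\cL}$ and $\cC_\cL^E$. The ``furthermore'' clause follows by the identical argument with $T$ in place of $T'$.
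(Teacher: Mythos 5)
Your proposal is correct and follows essentially the same route as the paper: the same $Q$-uniform model constructions for the lower bounds (L) and (I), and the same Ramsey-plus-$\TGraph$-symmetry case analysis (Tur\'{a}n pattern versus clique part) for the upper bound (U). The one organizational difference is modest but real: you derive non-emptiness of the minimizing set from the contrapositive of (L) applied at $\ell=\chi(I)$, whereas the paper proves $\cT_{\ell,\cL}^E\subseteq\cU_\ell(\cF)$ for all sufficiently large $\ell$ via a separate one-part Ramsey argument on the single-vertex-per-part model $N_Q$; your version avoids that extra Ramsey application.
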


\begin{remark}\label{rmk:Ramseyabstractchromatic}
  In fact, we show that the set in~\eqref{eq:Ramseyabstractchromaticmin} is either empty or an infinite
  interval of $\NN_+$ (with the empty case only happening when $\chi(I)=\infty$), and thus we also have
  \begin{align}\label{eq:Ramseyabstractchromaticmax}
    \chi(I)
    & =
    \begin{dcases*}
      \infty, & if $\cC_\cL^E\not\subseteq\cU_1(\cF)$;\\
      \max\{\ell\in\NN_+ \mid\cT_{\ell,\cL}^E\not\subseteq\cU_\ell(\cF)\}\cup\{0\} + 1, & otherwise.
    \end{dcases*}
  \end{align}
\end{remark}

The alternative formula provided by the theorem above can be used to algorithmically compute $\chi(I)$ when
$T$ is finitely axiomatizable.

\begin{theorem}\label{thm:computability}
  There exists an algorithm that computes $(\chi(I),\pi^t_I)$ for $I\:\TGraph\leadsto T$ for a finitely
  axiomatizable $T$ from a list of the axioms of $T$, a description of $I$ and $t\in\NN$.
\end{theorem}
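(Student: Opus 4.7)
The plan is to combine Theorems~\ref{thm:abstractchromatic} and~\ref{thm:Ramseyabstractchromatic}: since $\pi^t_I$ is an elementary function of $\chi(I)$ (the product formula gives a rational number for $\chi(I)\geq 2$, yields $-\infty$ for $\chi(I)\leq 1$, and its limit as $\chi(I)\to\infty$ is $1$), it suffices to compute $\chi(I)\in\NN_+\cup\{\infty\}$ algorithmically via formula~\eqref{eq:Ramseyabstractchromaticmin}. All the objects appearing in that formula are finite once we reduce to a finite forbidden family, which is possible because $T$ is finitely axiomatizable.

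I would proceed as follows. First, construct the finite axiomatization of the theory $T'$ from Theorem~\ref{thm:Ramseyabstractchromatic} by adjoining to $\TGraph\cup T$ the single universal axiom $\forall x\forall y, E(x,y)\tot I(E)(x,y)$; this uses the description of $I$ to substitute the open formula $I(E)$ explicitly. Let $\cL$ be the language of $T'$ and $n_{\max}$ the maximum number of variables in any axiom of $T'$. Enumerate every canonical $\cL$-structure of size at most $n_{\max}$ (a finite set up to isomorphism), and collect into $\cF$ those that falsify some axiom of $T'$ under some assignment of its variables; the resulting $\cF$ satisfies $T' = \Forb_{T_\cL}(\cF)$ because open formulas are preserved by induced substructures, so every violator contains an induced violator of size at most $n_{\max}$. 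Next, for each $\ell$ considered, the finite sets $\cP_{\ell,\cL}$, $\cC_\cL^E$, $\cT_{\ell,\cL}^E$, and $\cS_{\ell,V(F)}$ (for $F\in\cF$) are enumerable from their definitions; checking whether a given $F$ is $Q$-uniform with respect to a given split order amounts to verifying the finite collection of equalities $R_P(F) = \{\alpha\mid(f\comp\alpha,{\preceq_\alpha})\in Q_P\}$ for $P\in\cL$, so $\cU_\ell(F)$ and hence $\cU_\ell(\cF)$ are computable.

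With all these pieces in place, I would execute the decision procedure prescribed by~\eqref{eq:Ramseyabstractchromaticmin}: check whether $\cC_\cL^E\subseteq\cU_1(\cF)$ and output $\chi(I)=\infty$ if the inclusion fails; otherwise enumerate $\ell=1,2,\ldots$ and output the first $\ell$ at which $\cT_{\ell,\cL}^E\subseteq\cU_\ell(\cF)$ holds, together with $\pi^t_I$ computed by plugging this $\chi(I)$ into Theorem~\ref{thm:abstractchromatic}. The main obstacle is ensuring termination of the enumeration over $\ell$ in the non-$\infty$ case, and this is exactly the content of Remark~\ref{rmk:Ramseyabstractchromatic}: when $\cC_\cL^E\subseteq\cU_1(\cF)$, the set $\{\ell\in\NN_+\mid\cT_{\ell,\cL}^E\subseteq\cU_\ell(\cF)\}$ is a nonempty infinite interval, so the search is bound to terminate at $\ell=\chi(I)$.
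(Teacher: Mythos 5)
Your proposal is correct and follows essentially the same route as the paper: reduce $T'$ to a finite forbidden family $\cF$ of structures of bounded size, then decide the inclusions $\cC_\cL^E\subseteq\cU_1(\cF)$ and $\cT_{\ell,\cL}^E\subseteq\cU_\ell(\cF)$ by brute-force enumeration, relying on Theorem~\ref{thm:Ramseyabstractchromatic} and Remark~\ref{rmk:Ramseyabstractchromatic} to guarantee that the search over $\ell$ terminates at $\chi(I)$ whenever it is finite. The one step you skip that the paper makes explicit is Lemma~\ref{lem:axioms}: before extracting $\cF$ from the axioms of $T'$, the paper algorithmically replaces the given finite axiomatization of $T'$ by an equivalent finite one in which every axiom is a \emph{universal} formula. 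This matters because your key reduction step, ``every violator of $T'$ contains an induced violator on at most $n_{\max}$ vertices,'' is valid only when all axioms are universal; the hypothesis ``finitely axiomatizable'' does not by itself guarantee that the supplied list of axioms of $T$ is presented in universal form. Either invoke Lemma~\ref{lem:axioms} as the paper does, or stipulate that the input axiom list consists of universal formulas, to make the derivation of $T'=\Forb_{T_\cL}(\cF)$ airtight.
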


For the case when the theory is the theory of graphs with extra structure with some forbidden submodels that
are \emph{non-induced} in the graph part, we can provide slightly simpler formulas for $\chi(I)$. The first
theorem provides a formula based on the usual chromatic number, but as abstract
as~\eqref{eq:abstractchromatic} and the second provides formulas in terms of proper split orderings.

\begin{theorem}\label{thm:noninduced}
  Let $\cL$ be a language, let $E$ be the predicate symbol corresponding to $\TGraph$ in the language
  $\cL\cup\{E\}$ of $\TGraph\cup T_\cL$. Let $\cF$ be a family of models of $\TGraph\cup T_\cL$ and let
  $I\:\TGraph\leadsto\Forb_{\TGraph\cup T_\cL}(\cF\up^E)$ act identically on $E$.

  Then we have
  \begin{align}\label{eq:abstractchromaticnoninduced}
    \chi(I)
    & =
    \inf\{\chi(G) \mid
    G\in\cM[\TGraph]\land\forall M\in\cM[\Forb_{\TGraph\cup T_\cL}(\cF\up^E)], I(M)\not\cong G\}.
  \end{align}
\end{theorem}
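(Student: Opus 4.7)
Let $k$ denote the infimum on the right-hand side of~\eqref{eq:abstractchromaticnoninduced} and abbreviate $T\df\Forb_{\TGraph\cup T_\cL}(\cF\up^E)$. The plan rests on a single closure property of the non-induced setting: $T$ is closed under edge deletion, i.e.\ if $M\vDash T$ and $M'$ has the same vertex set and $\cL$-structure as $M$ but possibly fewer $E$-edges, then $M'\vDash T$. Indeed, any induced substructure of $M'$ isomorphic to some $F'\in\cF\up^E$ lifts, on the same vertices, to a substructure of $M$ whose $\cL$-part agrees with that of $F'$ and whose $E$-edge set contains $R_E(F')$; by the $E$-upward closure of $\cF\up^E$, this substructure of $M$ already lies in $\cF\up^E$, contradicting $M\vDash T$. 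Equivalently, the family of graphs realized as $I(M)$ with $M\vDash T$ is closed under edge deletion. Verifying this closure property is the only nontrivial step; the rest is a routine unpacking of definitions.

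For $\chi(I)\leq k$, I would show that every graph $G$ with $\chi(G)<\chi(I)$ is of the form $I(M)$ for some $M\vDash T$. After dispensing with the trivial case $\chi(G)=0$, set $\ell\df\chi(G)$; then $\ell$ lies in the set defining the sup in~\eqref{eq:abstractchromatic}, furnishing $N_n\in\cM_n[T]$ with $T_{n,\ell}\subseteq I(N_n)$ for every $n$. Choosing $n$ large enough that a proper $\ell$-coloring of $G$ can be padded with isolated vertices to balance the color classes, $G$ embeds as a non-induced subgraph of $T_{n,\ell}$, hence of $I(N_n)$. Restricting $N_n$ to the image of $V(G)$ and invoking the closure property to delete the spurious edges produces $M\vDash T$ with $I(M)\cong G$.

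For the reverse inequality $\chi(I)\geq k$, the case $\chi(I)=\infty$ follows immediately from the previous step (every graph is then realized, so the infimum set is empty). When $\chi(I)<\infty$, set $\ell\df\chi(I)$ and pick $n_0$ witnessing $\ell\notin$ the sup set in~\eqref{eq:abstractchromatic}. A short monotonicity observation --- deleting a vertex from the largest part of $T_{n+1,\ell}$ gives a copy of $T_{n,\ell}$ --- lets me assume $n_0\geq\ell$, so that $\chi(T_{n_0,\ell})=\ell$. If $T_{n_0,\ell}=I(M)$ for some $M\vDash T$, then $M\in\cM_{n_0}[T]$ trivially witnesses $T_{n_0,\ell}\subseteq I(M)$, contradicting the choice of $n_0$. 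Hence $T_{n_0,\ell}$ is not of the form $I(M)$ and has chromatic number $\chi(I)$, giving $k\leq\chi(I)$. Finally, the degenerate case $\chi(I)=1$ is handled by taking the minimal $n^*\geq 1$ with $\cM_{n^*}[T]=\varnothing$ and noting that $\overline{K}_{n^*}$, with chromatic number $1$, is not in the image.
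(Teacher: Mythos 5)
Your proposal is correct and takes essentially the same route as the paper's proof: both directions rest on the closure of $\Forb_{\TGraph\cup T_\cL}(\cF\up^E)$ under $E$-edge deletion (which the paper invokes tersely as ``from the definition of $\cF\up^E$'' and you spell out explicitly) combined with the reformulation~\eqref{eq:abstractchromatic2} of $\chi(I)$ from Lemma~\ref{lem:abstractchromset}. The separate treatment of $\chi(I)=1$ at the end is harmless but redundant, since the general argument (with $n_0\geq 1$ automatic) already covers it.
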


\begin{theorem}\label{thm:Ramseynoninduced}
  Let $\cL$ be a language, let $E$ be the predicate symbol corresponding to $\TGraph$ in the language
  $\cL\cup\{E\}$ of $\TGraph\cup T_\cL$ and let $J\:T_\cL\leadsto\TGraph\cup T_\cL$ be the structure-erasing
  interpretation. Let $\cF$ be a family of models of $\TGraph\cup T_\cL$ and let
  $I\:\TGraph\leadsto\Forb_{\TGraph\cup T_\cL}(\cF\up^E)$ act identically on $E$.

  Then we have
  \begin{align}\label{eq:Ramseyabstractchromaticnoninducedinf}
    \chi(I)
    & =
    \inf\{\ell\in\NN_+ \mid \cP_{\ell,\cL}\subseteq\chi_\ell^E(\cF)\}.
  \end{align}

  Furthermore, we have $\chi(I) < \infty$ if and only if $\cP_{1,\cL}\subseteq\cU_1(J(\cF))$,
  where $J(\cF)\df\{J(F) \mid F\in\cF\}$.
\end{theorem}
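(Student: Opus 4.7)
The plan is to prove the infimum formula by two opposing constructions mirroring those for Theorem~\ref{thm:Ramseyabstractchromatic}, then to derive the ``furthermore'' characterization by combining a direct construction (for the $\Rightarrow$ direction) with an extra layer of hypergraph Ramsey (for the $\Leftarrow$ direction). I will use the shorthand that a \emph{weak embedding} of $F\in\cF$ into $N$ is an injection $g\:V(F)\rightarrowtail V(N)$ that induces $J(F)$ into $J(N)$ on $\cL$ and sends every edge of $F$ to an edge of $N$; this is equivalent to $N$ containing an induced copy of some $F'\in\cF\up^E$, so $N\in\cM[T]$ iff no $F\in\cF$ weakly embeds in $N$.

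For the lower bound $\chi(I)\geq\inf\{\ell\mid\cP_{\ell,\cL}\subseteq\chi_\ell^E(\cF)\}$, given $Q\in\cP_{\ell,\cL}\setminus\chi_\ell^E(\cF)$ and any $n$, I build $N\in\cM_n[T]$ with $T_{n,\ell}\subseteq I(N)$ by fixing a near-equal $\ell$-partition $f\:[n]\to[\ell]$ with some extending split order $\preceq$, placing every cross-part pair into $R_E(N)$, and declaring $\alpha\in R_P(J(N))\iff(f\comp\alpha,\preceq_\alpha)\in Q_P$ for each $P\in\cL$. A weak embedding $g$ of $F\in\cF$ into $N$ would make $f\comp g$ a proper coloring of $I(F)$ (edges of $F$ land in cross-part pairs) and, by transporting the $Q$-uniformity of $J(N)$ along the induced $\cL$-embedding $g$, would make $J(F)$ itself $Q$-uniform with respect to $(f\comp g,\preceq_g)$; hence $(f\comp g,\preceq_g)$ would be an $E$-proper $Q$-split ordering of $F$, contradicting $Q\notin\chi_\ell^E(F)$. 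For the matching upper bound, I fix $\ell$ with $\cP_{\ell,\cL}\subseteq\chi_\ell^E(\cF)$ and suppose $N\in\cM_n[T]$ has $T_{n,\ell}\subseteq I(N)$ for $n$ large; the inclusion yields $W\subseteq V(N)$ with an $\ell$-partition $f$ of large thickness and all cross-part pairs of $W$ as edges of $N$. Applying Theorem~\ref{thm:Ramsey} to $J(N)\rest_W$ produces $W^*\subseteq W$ on which $J(N)\rest_{W^*}$ is $Q$-uniform with $\thk(f\rest_{W^*})$ still large, for some $Q\in\cP_{\ell,\cL}$; choosing $F\in\cF$ with an $E$-proper $Q$-split ordering $(f_F,\preceq_F)$ and embedding $F$ into $W^*$ by mapping each part $f_F^{-1}(i)$ into $(f\rest_{W^*})^{-1}(i)$ order-preservingly yields a weak embedding of $F$ into $N$ (induced on $\cL$ by the joint $Q$-uniformity, edge-preserving because the proper coloring property of $f_F$ forces edges of $F$ into cross-part pairs of $W$), contradicting $N\in\cM[T]$.

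One direction of the furthermore is a variant of the lower-bound construction: given $Q^1\in\cP_{1,\cL}\setminus\cU_1(J(\cF))$ and arbitrary $\ell,n$, I partition $[n]$ into $\ell$ parts and put every cross-part pair into $R_E(N)$ as before, but now make $J(N)$ globally $Q^1$-uniform under some total order on $[n]$ (ignoring the partition). A weak embedding of $F\in\cF$ into $N$ would then induce a total order on $V(F)$ witnessing $Q^1$-uniformity of $J(F)$, contradicting $Q^1\notin\cU_1(J(F))$; thus $\chi(I)=\infty$, which is the contrapositive $\chi(I)<\infty\Rightarrow\cP_{1,\cL}\subseteq\cU_1(J(\cF))$.

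The harder direction $\cP_{1,\cL}\subseteq\cU_1(J(\cF))\Rightarrow\chi(I)<\infty$ is where I expect to spend the most effort. I argue via the main formula: picking a witness $F_{Q^1}\in\cF$ for each $Q^1\in\cP_{1,\cL}$ (finite in number since $\cL$ is finite) and setting $M\df\max_{Q^1}|V(F_{Q^1})|$, I show that $\cP_{\ell,\cL}\subseteq\chi_\ell^E(\cF)$ once $\ell$ is sufficiently large in terms of $M$ and $\cL$. Given $Q^\ell\in\cP_{\ell,\cL}$, I classify each $S\in\binom{[\ell]}{k(P)}$ (for $P\in\cL$) by the order-type signature
\[
T_{P,S}\df\{\tau\in\cS_{1,k(P)}\mid(g_{\tau,S},\mathrm{trivial})\in Q^\ell_P\},
\]
where $g_{\tau,S}\:[k(P)]\rightarrowtail S$ is the unique $\tau$-order-preserving bijection onto $S$ (with respect to the natural order on $S\subseteq[\ell]$). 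Iterating Theorem~\ref{thm:hypergraphRamsey} once per $P\in\cL$ (each time with $2^{|\cS_{1,k(P)}|}$ colors) extracts $[\ell']\subseteq[\ell]$ of size at least $M$ on which each $T_{P,\cdot}$ is constant with common value $T^*_P$; setting $Q^1_{*,P}\df T^*_P$ defines $Q^1_*\in\cP_{1,\cL}$, and the hypothesis furnishes $F\df F_{Q^1_*}\in\cF$ with $J(F)$ $Q^1_*$-uniform under some total order $\sigma$. An order-preserving injection $f\:V(F)\rightarrowtail[\ell']\subseteq[\ell]$ (which exists because $|V(F)|\leq M\leq\ell'$) is then a proper coloring of $I(F)$ (being injective), and the order-type invariance of $Q^\ell$ on $[\ell']$-tuples forces $J(F)$ to be $Q^\ell$-uniform with respect to $(f,\mathrm{trivial})$. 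Hence $Q^\ell\in\chi_\ell^E(F)\subseteq\chi_\ell^E(\cF)$, so $\chi(I)\leq\ell<\infty$ by the main formula, completing the plan.
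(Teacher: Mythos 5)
Your proof is correct, but it takes a genuinely different route from the paper. The paper obtains Theorem~\ref{thm:Ramseynoninduced} as a corollary of Theorem~\ref{thm:Ramseyabstractchromatic}: it establishes two short auxiliary claims (Claims~\ref{clm:noninducedcomplete} and~\ref{clm:noninducedTuran}) showing that the restriction map $Q\mapsto Q\rest_\cL$ is a bijection under which the conditions $\cC_{\cL\cup\{E\}}^E\subseteq\cU_1(\cF\up^E)$ and $\cT_{\ell,\cL\cup\{E\}}^E\subseteq\cU_\ell(\cF\up^E)$ translate exactly into $\cP_{1,\cL}\subseteq\cU_1(J(\cF))$ and $\cP_{\ell,\cL}\subseteq\chi_\ell^E(\cF)$, then invokes \eqref{eq:Ramseyabstractchromaticmin}; only the $\chi(I)=\infty$ case (showing that $\cP_{1,\cL}\not\subseteq\cU_1(J(\cF))$ forces $\cP_{\ell,\cL}\not\subseteq\chi_\ell^E(\cF)$ for all $\ell$) needs a separate short construction $Q\mapsto Q'$. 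You instead reprove the result from scratch: your lower bound builds Tur\'{a}n-plus-uniform models exactly as the paper does inside the proof of Theorem~\ref{thm:Ramseyabstractchromatic}, your upper bound re-runs the Theorem~\ref{thm:Ramsey} extraction argument, your forward direction of the ``furthermore'' is a mild variant of your lower-bound construction, and your backward direction iterates (the $\ell=1$ case of) Theorem~\ref{thm:hypergraphRamsey} via order-type signatures to shrink an arbitrary $Q^\ell$ onto an order-homogeneous block where it behaves like a $1$-pattern. The paper's approach is more modular (the two claims are a clean dictionary between the induced and non-induced settings, and the heavy lifting is done once in Theorem~\ref{thm:Ramseyabstractchromatic}), while yours is self-contained and somewhat more elementary in that it avoids the detour through the enlarged language $\cL\cup\{E\}$ and makes the Ramsey extraction and the order-type classification fully explicit; the cost is that you effectively duplicate most of the proof of Theorem~\ref{thm:Ramseyabstractchromatic}. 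One small note: for the lower bound to yield $\chi(I)\geq\ell_0\df\inf\{\ell\mid\cP_{\ell,\cL}\subseteq\chi_\ell^E(\cF)\}$ when $\ell_0\geq 2$, you should apply your construction with $\ell=\ell_0-1$ (which lies outside the set by minimality); you implicitly do this but it is worth stating.
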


\begin{remark}\label{rmk:Ramseynoninduced}
  Just as in the case of Theorem~\ref{thm:Ramseyabstractchromatic}, the set
  in~\eqref{eq:Ramseyabstractchromaticnoninducedinf} is either empty or an infinite interval of $\NN_+$, and
  thus we also have
  \begin{align}\label{eq:Ramseyabstractchromaticnoninducedsup}
    \chi(I)
    & =
    \sup\{\ell\in\NN_+ \mid
    \cP_{\ell,\cL}\not\subseteq\chi_\ell^E(\cF)\}\cup\{0\} + 1
  \end{align}
\end{remark}

\section{Abstract Tur\'{a}n densities from abstract chromatic number}
\label{sec:Turan}

The objective of this section is to prove Theorem~\ref{thm:abstractchromatic}. Before we do so, we show that the
set in the definition of $\chi(I)$ in~\eqref{eq:abstractchromatic} is a non-empty initial interval of $\NN$.

\begin{lemma}\label{lem:abstractchromset}
  Given an open interpretation $I\:\TGraph\leadsto T$, the set
  \begin{align}\label{eq:abstractchromset}
    \{\ell\in\NN_+ : \forall n\in\NN, \exists N\in\cM_n[T], I(N)\supseteq T_{n,\ell}\}\cup\{0\}
  \end{align}
  is a non-empty initial interval of $\NN$.

  In particular, we have
  \begin{align}\label{eq:abstractchromatic2}
    \chi(I)
    & =
    \inf\{\ell\in\NN_+ \mid \exists n\in\NN, \forall N\in\cM_n[T], I(N)\not\supseteq T_{n,\ell}\}.
  \end{align}
\end{lemma}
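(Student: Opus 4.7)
The set $S$ in~\eqref{eq:abstractchromset} is immediately non-empty, since $0 \in S$ by definition. For the initial-interval property, the plan is to fix $\ell \in S \cap \NN_+$ and $\ell' \in \NN_+$ with $\ell' \leq \ell$, then show $\ell' \in S$; equivalently, given an arbitrary $n \in \NN$, produce some $N' \in \cM_n[T]$ with $T_{n,\ell'} \subseteq I(N')$.

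The approach is a blow-up followed by an induced restriction. I will apply the hypothesis $\ell \in S$ at the larger size $n' \df \ell n$, obtaining $N \in \cM_{n'}[T]$ together with an injection $\phi \: [n'] \rightarrowtail V(N)$ that sends each edge of $T_{n',\ell}$ to an edge of $I(N)$. Each of the $\ell$ parts of $T_{n',\ell}$ then has size exactly $n$, so in particular each is of size at least $\ceil{n/\ell'}$. Choosing any $\ell'$ of these $\ell$ parts and selecting within the $j$-th of them a subset $W_j$ of size $\ceil{n/\ell'}$ or $\floor{n/\ell'}$ to match the size of the $j$-th part of $T_{n,\ell'}$, the union $W \df W_1 \disjcup \cdots \disjcup W_{\ell'}$ has $|W| = n$, and $T_{n',\ell}\rest_W$ is by construction a complete $\ell'$-partite graph whose part sizes coincide with those of $T_{n,\ell'}$; hence $T_{n',\ell}\rest_W \cong T_{n,\ell'}$.

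Setting $N' \df N\rest_{\phi(W)}$, the universality of $T$ gives $N' \in \cM_n[T]$, and the fact that $I$ is an open interpretation implies that $I$ commutes with restriction to induced substructures, i.e., $I(N') = I(N)\rest_{\phi(W)}$. Composing $\phi\rest_W$ with the isomorphism $T_{n,\ell'} \cong T_{n',\ell}\rest_W$ therefore realizes $T_{n,\ell'}$ as a non-induced subgraph of $I(N')$, proving $\ell' \in S$. Once $S$ is known to be an initial interval of $\NN$, equation~\eqref{eq:abstractchromatic2} is bookkeeping: $\NN_+ \setminus S$ is either empty (and both $\chi(I)$ and the infimum equal $\infty$) or of the form $\{\sup S + 1, \sup S + 2, \ldots\}$, whose infimum is $\sup S + 1 = \chi(I)$. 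There is no real obstacle; the only step needing care is the arithmetic of choosing the $|W_j|$'s to reproduce the precise Tur\'{a}n partition of $T_{n,\ell'}$, which goes through because enlarging to $n' = \ell n$ leaves plenty of room inside each part.
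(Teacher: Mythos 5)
Your proof is correct and takes essentially the same approach as the paper: both rest on the nesting fact that $T_{n,\ell'}$ embeds as a non-induced subgraph in a larger $\ell$-partite Tur\'{a}n graph, apply the hypothesis $\ell\in S$ at that larger size, and then restrict the resulting model $N$ to an $n$-vertex subset carrying a copy of $T_{n,\ell'}$ (using that universal theories are closed under induced substructures and that open interpretations commute with restriction). The paper uses the tighter blow-up size $\ell\cdot\ceil{n/\ell'}$ and leaves the restriction step implicit, while you take the cruder but equally valid $\ell n$ and spell the restriction out; the substance is identical, and your treatment of~\eqref{eq:abstractchromatic2} matches the paper's.
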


\begin{proof}
  Let $X$ be the set in~\eqref{eq:abstractchromset}. It is clear that $0\in X$. On the other hand, if $\ell\in
  X\cap\NN_+$, then for every $n\in\NN$, there exists $N\in\cM_n[T]$ such that $I(N)\supseteq T_{n,\ell}$. So
  if $\ell'\in [\ell]$ and $n\in\NN$, then since $T_{n,\ell'} \subseteq T_{\ell\cdot\ceil{n/\ell'}, \ell}$,
  it follows that there exists $N'\in\cM_n[T]$ such that $I(N')\supseteq T_{n,\ell'}$, hence $\ell'\in X$.

  Since $\chi(I) = \sup X + 1$ by~\eqref{eq:abstractchromatic} and $X$ is a non-empty initial interval of
  $\NN$, we get $\chi(I) = \inf \NN\setminus X$, so~\eqref{eq:abstractchromatic2} follows.
\end{proof}

\begin{proofof}{Theorem~\ref{thm:abstractchromatic}}
  If $\chi(I) = \infty$, then for every $n\in\NN$, there exists $N_n\in\cM_n[T]$ such that $I(N_n)\supseteq
  T_{n,n} = K_n$, so $\pi^t_I = 1$, hence~\eqref{eq:Turandensityabstractchromatic} holds.

  On the other hand, if $\chi(I) = 1$, then by Lemma~\ref{lem:abstractchromset}, there exists $n\in\NN$ such
  that for every $N\in\cM_n[T]$, we have $I(N)\not\supseteq T_{n,1} = \overline{K}_n$. But since every graph
  on $n$ vertices contains a non-induced copy of $\overline{K}_n$, we must have $\cM_n[T] = \varnothing$. This
  means that $T$ is degenerate, hence $\pi^t_I = -\infty$, so~\eqref{eq:Turandensityabstractchromatic} holds.

  Suppose then that $2\leq\chi(I) < \infty$. For every $n\in\NN$, let $N_n\in\cM_n[T]$ be such that
  $I(N_n)\supseteq T_{n,\chi(I)-1}$. Then we get
  \begin{align*}
    \pi^t_I
    & \geq
    \liminf_{n\to\infty} p(K_t,I(N_n))
    \geq
    \liminf_{n\to\infty} p(K_t,T_{n,\chi(I)-1})
    =
    \prod_{j=0}^{t-1}\left(1 - \frac{j}{\chi(I)-1}\right).
  \end{align*}
  Suppose now toward a contradiction that $(N_m)_{m\in\NN}$ is a sequence of models of $T$ with $\lvert
  N_m\rvert < \lvert N_{m+1}\rvert$ such that $\lim_{m\to\infty} p(K_t, I(N_m)) > \prod_{j=0}^{t-1}(1 -
  j/(\chi(I)-1))$. Fix $n\in\NN$ and note that Theorem~\ref{thm:ESSAS} for $\cF\df\{T_{n,\chi(I)}\}$ implies
  that there exists $m_n\in\NN$ such that $I(N_{m_n})\supseteq T_{n,\chi(I)}$. By restricting $N_{m_n}$ to a
  set $V$ of size $n$ such that $I(N_{m_n})\rest_V\supseteq T_{n,\chi(I)}$, we conclude that there exists
  $N'_n\in\cM_n[T]$ such that $I(N'_n)\supseteq T_{n,\chi(I)}$ so $\chi(I)\geq \chi(I)+1$, a contradiction (as
  $\chi(I) < \infty$).
\end{proofof}

\section{Partite Ramsey numbers}
\label{sec:Ramsey}

The objective of this section is to prove Theorems~\ref{thm:hypergraphRamsey} and~\ref{thm:Ramsey}.

\begin{proofof}{Theorem~\ref{thm:hypergraphRamsey}}
  The proof is by induction in the length $t$ of the tuple $\vec{k} = (k_1,\ldots,k_t)$.

  For the case $t = 1$, let us denote $k_1$ simply by $k$ and let us identify $\cP_{\ell,\vec{k}}$ with
  $2^{\cC_{\ell,k}}$. Let $c\df\lvert\cP_{\ell,\vec{k}}\rvert < \infty$ and let $n\df R(k,c,\ell m) < \infty$
  be the usual Ramsey number corresponding to finding monochromatic cliques of size $\ell m$ in colorings of
  $k$-uniform complete hypergraphs with $c$ colors. We will show that $R_{\ell,\vec{k}}(m)\leq n$.

  Suppose $H$ is a $\vec{k}$-hypergraph and $f\:V(H)\to[\ell]$ has $\thk(f)\geq n$. For every $j\in[\ell]$,
  let $v(1,j),\ldots, v(n,j)$ be distinct vertices in $f^{-1}(j)$ and let $V\df\{v(i,j) \mid i\in[n]\land
  j\in[\ell]\}$.

  For a set $A\in\binom{[n]}{k}$, let $\iota_A\:[k]\rightarrowtail [n]$ be the injective function that
  enumerates $A$ in increasing order and if we are further given a weak composition $q =
  (q_j)_{j=1}^\ell\in\cC_{\ell,k}$, let $A_q\subseteq V$ be defined by
  \begin{align}\label{eq:Aq}
    A_q
    & \df
    \left\{v(\iota_A(i),j) \;\middle\vert\;
    i\in[k]\land j\in[\ell]\land \sum_{r=1}^{j-1} q_r < i\leq \sum_{r=1}^j q_r\right\}.
  \end{align}
  Note that $\lvert A_q\rvert = k$ and $\lvert f^{-1}(j)\cap A_q\rvert = q_j$ for every
  $j\in[\ell]$. Furthermore, if $q\neq q'$, then $A_q\neq A_{q'}$.

  Define the coloring $g\:\binom{[n]}{k}\to\cP_{\ell,\vec{k}}$ by letting
  \begin{align*}
    g(A) \df \{q\in\cC_{\ell,k} \mid A_q \in E(H)\},
  \end{align*}
  where $E(H)$ is the edge set of $H$. By the definition of $n=R(k,c,\ell m)$, there exists $U\subseteq[n]$ of
  size $\lvert U\rvert = \ell m$ such that $g\rest_{\binom{U}{k}}$ is monochromatic, say, of color
  $Q\in\cP_{\ell,\vec{k}}$.

  Let us enumerate the elements of $U$ in increasing order $u_1 < \cdots < u_{\ell m}$ and let
  \begin{align*}
    W & \df \{v(u_{(j-1)m + r}, j) \mid j\in[\ell]\land r\in[m]\}.
  \end{align*}
  Clearly, for every $j\in[\ell]$, we have $W\cap f^{-1}(j) = \{v(u_{(j-1)m + r}, j) \mid r\in[m]\}$, which has
  size $m$, so $\thk(f\rest_W) = m$.

  We claim that $H\rest_W$ is $Q$-uniform with respect to $f\rest_W$. To show this, we need to show that for
  every $B\in\binom{W}{k}$, we have
  \begin{align}\label{eq:Bobjective}
    B\in E(H) & \iff q^B\in Q,
  \end{align}
  where $q^B\in\cC_{\ell,k}$ is given by $q^B_j\df\lvert f^{-1}(j)\cap B\rvert$. 

  Note that the definition of $W$ implies that there exists an increasing function
  $\eta_B\:[k]\rightarrowtail[n]$ with $\im(\eta_B)\subseteq U$ such that
  \begin{align*}
    B
    & =
    \left\{v(\eta_B(t), j) \;\middle\vert\;
    i\in[k]\land j\in[\ell]\land \sum_{r=1}^{j-1} q^B_r < i\leq \sum_{r=1}^j q^B_r\right\}.
  \end{align*}
  Since $\iota_{\im(\eta_B)} = \eta_B$, from~\eqref{eq:Aq} we get $\im(\eta_B)_{q^B} = B$ and for every
  $q\in\cC_{\ell,k}\setminus\{q^B\}$, we have $\im(\eta_B)_q\neq B$. Since $g\rest_{\binom{U}{k}}$ is
  monochromatic of color $Q$, we have
  \begin{align*}
    Q & = g(\im(\eta_B)) = \{q\in\cC_{\ell,k} \mid \im(\eta_B)_q\in E(H)\},
  \end{align*}
  so~\eqref{eq:Bobjective} follows, concluding the proof of case $t=1$.

  \medskip

  Suppose now that $t\geq 2$ and, by inductive hypothesis, suppose $m'\df R_{\ell,(k_1,\ldots,k_{t-1})}(m)$ is
  finite. Let also $n\df R_{\ell,(k_t)}(m')$, which by the case above is also finite. We will show that
  $R_{\ell,\vec{k}}(m)\leq n$.

  Suppose $H$ is a $\vec{k}$-hypergraph and $f\:V(H)\to[\ell]$ has $\thk(f)\geq n$. By the definition of
  $n=R_{\ell,(k_t)}(m')$, there exists $Q'\in\cP_{\ell,(k_t)}$ and $W'\subseteq V(H)$ such that
  $\thk(f\rest_{W'})\geq m'$ and the $k_t$-hypergraph part of $H\rest_{W'}$ is $Q'$-uniform with respect to
  $f\rest_{W'}$. In turn, by the definition of $m'=R_{\ell,(k_1,\ldots,k_{t-1})}(m)$, there exists
  $Q''\in\cP_{\ell,(k_1,\ldots,k_{t-1})}$ and $W\subseteq W'$ such that $\thk(f\rest_W)\geq m$ and the
  $(k_1,\ldots,k_{t-1})$-hypergraph part of $H\rest_W$ is $Q''$-uniform with respect to $f\rest_W$. By
  letting $Q\in\cP_{\ell,\vec{k}}$ be given by
  \begin{align*}
    Q_j & \df
    \begin{dcases*}
      Q''_j, & if $j\in[t-1]$;\\
      Q', & if $j = t$;
    \end{dcases*}
  \end{align*}
  it follows that $H\rest_W$ is $Q$-uniform with respect to $f\rest_W$.
\end{proofof}

Before we can finally prove Theorem~\ref{thm:Ramsey}, we need one more definition.

\begin{definition}
  If $\leq$ is a total order on a set $V$ and $f\:V\to[\ell]$, we let ${\leq}\down_f\df
  {\leq}\cap\bigcup_{i\in[\ell]} f^{-1}(i)\times f^{-1}(i)$ be the restriction of $\leq$ to the level sets of
  $f$, that is, it is the unique partial order such that $(f,{\leq}\down_f)$ is an $\ell$-split order and
  $\leq$ is an extension of it.
\end{definition}

\begin{proofof}{Theorem~\ref{thm:Ramsey}}
  Consider the set
  \begin{align*}
    K & \df \{(P,\leq) \mid P\in\cL\land {\leq} \text{ is a total order on }[k(P)]\},
  \end{align*}
  enumerate the elements of $K$ as $(P_1,{\leq^1}),\ldots,(P_t,{\leq^t})$ and define $\vec{k} = (k_1,\ldots,k_t)$
  by letting $k_i\df k(P_i)$.

  Let $n\df R_{\ell,\vec{k}}(m)$, which is finite by Theorem~\ref{thm:hypergraphRamsey}. We claim that
  $R_{\ell,T}(m)\leq n$. Suppose $M$ is a model of $T$ and $(f,{\preceq})\in\cS_{\ell,V(M)}$ is an $\ell$-split
  order on $V(M)$ with $\thk(f)\geq n$. Define the relation $\leq$ on $V(M)$ by
  \begin{align*}
    v\leq w & \iff f(v) < f(w) \lor v\preceq w.
  \end{align*}
  Since $(f,{\preceq})$ is a split order, it follows that $\leq$ is a total order extending $\preceq$. Note
  that $f$ becomes non-decreasing with respect to $\leq$ on $V(M)$ and the usual order on $[\ell]$, that is,
  we have
  \begin{align}\label{eq:fnondecreasing}
    v\leq w & \to f(v)\leq f(w)
  \end{align}
  for every $v,w\in V(M)$.

  Define now the $\vec{k}$-hypergraph $H$ with vertex set $V(H)\df V(M)$ by letting the $i$-th edge set be
  \begin{align*}
    E_i(H)
    & \df
    \left\{A\in\binom{V(H)}{k_i} \;\middle\vert\; \iota_A^i\in R_{P_i}(M)\right\},
  \end{align*}
  where $\iota_A^i:[k(P_i)]\rightarrowtail V(M)$ is the unique function with $\im(\iota_A^i) = A$ that is
  increasing with respect to the order $\leq^i$ on $[k(P_i)]$ and the order $\leq$ on $V(M)$ (the latter
  condition is equivalent to ${\leq_{\iota_A^i}} = {\leq^i}$). For every $P\in\cL$, let $I_P \df \{i\in[t] \mid P_i =
  P\}$ and note that
  \begin{align}\label{eq:RamseyRPM}
    R_P(M) & = \{\alpha\in (V(M))_{k(P)} \mid i\in I_P\land \im(\alpha)\in E_i(H)\land {\leq_\alpha} = {\leq^i}\}.
  \end{align}

  By the definition of $n=R_{\ell,\vec{k}}(m)$, there exists $Q'\in\cP_{\ell,\vec{k}}$ and a set $W\subseteq
  V(H)$ such that $\thk(f\rest_W)\geq m$ and $H\rest_W$ is $Q'$-uniform with respect to $f\rest_W$. Define
  then the $\ell$-Ramsey pattern $Q\in\cP_{\ell,\cL}$ on $\cL$ by
  \begin{align}\label{eq:RamseyQP}
    Q_P & \df \{(g,{\leq^i}\down_g) \mid g\:[k(P)]\to[\ell]\land q^g\in Q_i'\land i\in I_P^g\},
  \end{align}
  where $q^g\in\cC_{\ell,k(P)}$ is the weak composition given by $q^g_j \df\lvert g^{-1}(j)\rvert$ and
  \begin{align*}
    I^g_P & \df \{i\in I_P \mid \forall j_1,j_2\in[k(P)], (j_1\leq^i j_2 \to g(j_1)\leq g(j_2))\}.
  \end{align*}

  We claim that $M\rest_W$ is $Q$-uniform with respect to $(f\rest_W, {\preceq_W})$. To show this, we have to
  show that
  \begin{align*}
    R_P(M\rest_W) & = \{\alpha\in (W)_{k(P)} \mid (f\comp\alpha, {\preceq_\alpha})\in Q_P\}.
  \end{align*}

  Let $\alpha\in R_P(M\rest_W)$ and let us show that $(f\comp\alpha,{\preceq_\alpha})\in
  Q_P$. By~\eqref{eq:RamseyRPM}, there exists $i\in I_P$ such that $\im(\alpha)\in E_i(H)$ and ${\leq_\alpha}
  = {\leq^i}$. Note that if $j_1,j_2\in [k(P)]$ are such that $j_1\leq^i j_2$, then we must have
  $\alpha(j_1)\leq \alpha(j_2)$, hence~\eqref{eq:fnondecreasing} implies $f(\alpha(j_1))\leq f(\alpha(j_2))$,
  so $i\in I_P^{f\comp\alpha}$. On the other hand, since $\leq$ extends $\preceq$ and $(f,{\preceq})$ is a
  split order, it follows that ${\preceq_\alpha} = {\leq^i}\down_{f\comp\alpha}$. Note also that since
  $H\rest_W$ is $Q'$-uniform with respect to $f\rest_W$ and $\im(\alpha)\in E_i(H)$, we must have
  $q^{f\comp\alpha}\in Q'_i$. Putting everything together, we have that there exists $i\in I_P^{f\comp\alpha}$
  such that $q^{f\comp\alpha}\in Q'_i$ and ${\preceq_\alpha} = {\leq^i}\down_{f\comp\alpha}$,
  so~\eqref{eq:RamseyQP} gives $(f\comp\alpha,{\preceq_\alpha})\in Q_P$.

  Suppose now that $\alpha\in (W)_{k(P)}$ is such that $(f\comp\alpha, {\preceq_\alpha})\in Q_P$ and let us
  show that $\alpha\in R_P(M\rest_W)$. From~\eqref{eq:RamseyQP}, we know that there exists $i\in
  I_P^{f\comp\alpha}$ such that $q^{f\comp\alpha}\in Q'_i$ and ${\preceq_\alpha} =
  {\leq^i}\down_{f\comp\alpha}$. The fact that $H\rest_W$ is $Q'$-uniform with respect to $f\rest_W$ then
  implies that $\im(\alpha)\in E_i(H)$ and the fact that $i\in I_P^{f\comp\alpha}$ along
  with~\eqref{eq:fnondecreasing} implies ${\leq_\alpha} = {\leq^i}$. Putting everything together, since
  $I_P^{f\comp\alpha}\subseteq I_P$, we have that there exists $i\in I_P$ such that $\im(\alpha)\in E_i(H)$
  and ${\leq_\alpha} = {\leq^i}$, so by~\eqref{eq:RamseyRPM}, we get $\alpha\in R_P(M\rest_W)$.

  Therefore $M\rest_W$ is $Q$-uniform with respect to $(f\rest_W, {\preceq_W})$.
\end{proofof}

\section{Ramsey-based formula for the abstract chromatic number}
\label{sec:Ramseyabstractchromatic}

In this section we prove Theorems~\ref{thm:Ramseyabstractchromatic} and~\ref{thm:computability}.

\begin{proofof}{Theorem~\ref{thm:Ramseyabstractchromatic}}
  Recall from~\cite[Remark~2]{CR19} that we can write $I = J\comp A\comp S$, where
  $S\:\TGraph\leadsto\TGraph\cup T$ is the structure-erasing interpretation, $A\:\TGraph\cup T\leadsto T'$ is
  the axiom-adding interpretation and $J\:T'\leadsto T$ is the isomorphism that acts identically on predicate
  symbols of $T$ and acts as $I$ on $E$ (the inverse $J^{-1}\:T\leadsto T'$ acts identically on the predicate
  symbols of $T$).

  We start by characterizing when $\chi(I)$ is finite. Suppose first that $\cC_\cL^E\not\subseteq\cU_1(\cF)$
  and let us show that $\chi(I)=\infty$. Let $Q\in\cC_\cL^E\setminus\cU_1(\cF)$ and for every $n\in\NN$, let
  $N_n$ be the unique structure on $\cL$ with vertex set $[n]$ that is $Q$-uniform with respect to the usual
  order $\leq$ on $[n]$, that is, we have
  \begin{align*}
    R_P(N_n) & \df \{\alpha\in ([n])_{k(P)} \mid {\leq_\alpha}\in Q_P\}.
  \end{align*}
  Our choice of $Q$ ensures that $N_n$ is a model of $T' = \Forb_{T_\cL}(\cF)$. Since $Q\in\cC_\cL^E$, it
  follows that $S(A(N_n))$ is the complete graph $K_n$, so $I(J^{-1}(N_n))\supseteq T_{n,\ell}$ for
  every $\ell\in\NN_+$, so $\chi(I) = \infty$ by~\eqref{eq:abstractchromatic}.

  \medskip

  Suppose now that $\cC_\cL^E\subseteq\cU_1(\cF)$ and let us show that $\chi(I) < \infty$ and that the minimum
  in~\eqref{eq:Ramseyabstractchromaticmin} is attained (i.e., that the set
  in~\eqref{eq:Ramseyabstractchromaticmin} is non-empty). For every
  $Q\in\cC_\cL^E$, let $F_Q\in\cF$ and $\preceq^Q\in\cS_{1,V(F_Q)}$ be such that $F_Q$ is $Q$-uniform with
  respect to $\preceq^Q$. Let $m\df\max\{\lvert F_Q\rvert \mid Q\in\cC_\cL^E\}\cup\{2\}$ and let $n\df
  R_{1,T_\cL}(m)$ (which is finite by Theorem~\ref{thm:Ramsey}).

  We will show that $\chi(I)\leq n$. By~\eqref{eq:abstractchromatic2} of Lemma~\ref{lem:abstractchromset}, it
  is enough to show that every $N\in\cM_n[T]$ satisfies $I(N)\not\supseteq T_{n,n}$. Suppose not and for a
  violating $N$ let $M = J(N)$ be the associated model of $T'$. The definition of $n = R_{1,T_\cL}(m)$ implies
  that there exists $W\subseteq V(M)$ such that $\lvert W\rvert\geq m$ and $M\rest_W$ is $Q$-uniform with
  respect to $\leq_W$ where $\leq$ is the usual order over $[n]$. Since $I(N)\supseteq T_{n,n} = K_n$ and
  $m\geq 2$, it follows that $Q\in\cC_\cL^E$. But this is a contradiction as $M\rest_W$ must then contain an
  induced copy of $F_Q\in\cF$ (as $\lvert W\rvert\geq m\geq\lvert F_Q\rvert$), hence $\chi(I) < \infty$.

  To show that the minimum in~\eqref{eq:Ramseyabstractchromaticmin} is attained, it is enough to show that for
  $\ell\geq n$, we have $\cT_{\ell,\cL}^E\subseteq\cU_\ell(\cF)$ (as this implies that the set
  in~\eqref{eq:Ramseyabstractchromaticmin} is non-empty). Fix $Q\in\cT_{\ell,\cL}^E$ and let $N_Q$ be the
  unique structure on $\cL$ with vertex set $[\ell]$ that is $Q$-uniform with respect to the unique element of
  $\cS_{\ell,\ell}$ of the form $(\id_\ell,{\preceq_0})$, where $\id_\ell(i)\df i$ for every $i\in[\ell]$ and
  $\preceq_0$ is the trivial partial order, that is, we have
  \begin{align*}
    R_P(N_Q) & \df \{\alpha\in ([\ell])_{k(P)} \mid (\alpha, {\preceq_0})\in Q_P\}.
  \end{align*}
  Since $\ell\geq n = R_{1,T_\cL}(m)$, we know that there exists $W\subseteq[\ell]$ with $\lvert W\rvert = m$
  and some $Q'\in\cP_{1,\cL}$ such that $N_Q\rest_W$ is $Q'$-uniform with respect to $\leq_W$, where $\leq$ is
  the usual order on $[\ell]$. Since $N_Q$ is $Q$-uniform with respect to $(\id_\ell,{\preceq_0})$, $Q$ is an
  $E$-Tur\'{a}n pattern and $m\geq 2$, it follows that $Q'$ must be $E$-complete. But then since $\lvert
  F_{Q'}\rvert\leq m = \lvert W\rvert$, there exists $U\subseteq W$ such that $N_Q\rest_U\cong F_{Q'}$. As
  $F_{Q'}$ is an induced submodel of $N_Q$ and $Q\in\cU_\ell(N_Q)$, we get $Q\in\cU_\ell(F_{Q'})$, hence
  $\cT_{\ell,\cL}^E\subseteq\cU_\ell(\cF)$, so the minimum in~\eqref{eq:Ramseyabstractchromaticmin} is
  attained.

  \medskip

  To finish the proof of~\eqref{eq:Ramseyabstractchromaticmin}, it remains to show that if $\chi(I) < \infty$
  and $\ell_0 < \infty$ is the minimum in~\eqref{eq:Ramseyabstractchromaticmin}, then $\chi(I) = \ell_0$. We
  start by showing $\chi(I)\leq\ell_0$.

  Since $\cC_\cL^E$ is finite and $\cC_\cL^E\subseteq\cU_1(\cF)$, we know there exists a finite
  $\cF'\subseteq\cF$ such that $\cC_\cL^E\subseteq\cU_1(\cF')$. Since $\ell_0 < \infty$, we have
  $\cT_{\ell_0,\cL}^E\subseteq\cU_{\ell_0}(\cF)$, that is, for every $Q\in\cT_{\ell_0,\cL}^E$, there
  exists $F_Q\in\cF$ and $(f_Q,{\preceq^Q})\in\cS_{\ell_0,V(F_Q)}$ such that $F_Q$ is $Q$-uniform with
  respect to $(f_Q,{\preceq^Q})$.

  Let
  \begin{align*}
    m & \df\max\{\lvert F_Q\rvert \mid Q\in\cT_{\ell_0}^E\}\cup\{\lvert F\rvert \mid F\in\cF'\}\cup\{2\}
  \end{align*}
  and let
  $n\df \ell_0\cdot R_{\ell_0,T_\cL}(m)$ (which is finite by
  Theorem~\ref{thm:Ramsey}). By~\eqref{eq:abstractchromatic2} of Lemma~\ref{lem:abstractchromset}, to show
  $\chi(I)\leq\ell_0$, it is enough to show that every $N\in\cM_n[T]$ satisfies $I(N)\not\supseteq
  T_{n,\ell_0}$. Suppose not and for a violating $N\in\cM_n[T]$, let $f_N\:V(N)\to[\ell_0]$ be a function
  whose level sets are the parts of the natural partition of $T_{n,\ell_0}$ so that $\thk(f_N) =
  n/\ell_0 = R_{\ell_0,T_\cL}(m)$.

  Let $M\df J(N)$ be the associated model of $T'$ and let $\preceq^N$ be any partial order such that
  $(f_N,{\preceq^N})$ is an $\ell_0$-split order. Since $\thk(f_N) = R_{\ell_0,T_\cL}(m)$, there exists
  an $\ell_0$-Ramsey pattern $Q\in\cP_{\ell_0,\cL}$ and some $W\subseteq[n]$ such that
  $\thk(f_N\rest_W)\geq m$ and $M\rest_W$ is $Q$-uniform with respect to $(f_N\rest_W,{\preceq^N_W})$.

  We claim that $Q$ is an $E$-Tur\'{a}n pattern. Suppose not. Since the definition of $f_N$ ensures that $Q_E$
  contains all $(g,{\preceq})\in\cS_{\ell,2}$ with $g$ injective, for $Q$ to not be an $E$-Tur\'{a}n pattern
  there must exist $i\in[\ell]$ such that $Q_E$ contains at least one of
  $(g_i,{\leq}),(g_i,{\geq})\in\cS_{\ell,2}$, where $g_i(1) = g_i(2) = i$ and $\leq$ is the usual order on
  $[2]$ and $\geq$ is its reverse. From the symmetry of $E$ and the fact that $\thk(f_N\rest_W)\geq m\geq 2$,
  it follows that $Q_E$ must in fact contain both $(g_i,{\leq})$ and $(g_i,{\geq})$. Let $Q'\in\cC_\cL^E$ be
  given by
  \begin{align*}
    Q'_E & \df \cS_{1,2}; &
    Q'_P & \df \{{\preceq} \mid (f,{\preceq})\in Q_P\land \im(f) = \{i\}\};
  \end{align*}
  for every $P\in\cL\setminus\{E\}$. Let $U\df f_N^{-1}(i)\cap W$ and note that $M\rest_U$ is $Q'$-uniform
  with respect to $\preceq_U$. Since $\lvert U\rvert\geq \thk(f_N\rest_W)\geq m\geq\max\{\lvert F\rvert\mid
  F\in\cF'\}$ and since $Q'\in\cC_\cL^E$, there exists $F\in\cF'$ such that $M\rest_U$ contains a copy of $F$,
  so $M$ is not a model of $T' = \Forb_{T_\cL}(\cF)$, a contradiction. Thus $Q$ must be an $E$-Tur\'{a}n
  pattern.

  Since $Q\in\cT_{\ell,\cL}^E$, it follows that $M\rest_W$ must contain an induced copy of $F_Q\in\cF$,
  namely, such copy can be produced by taking exactly $\lvert f_Q^{-1}(i)\rvert$ vertices in $f_N^{-1}(i)\cap
  W$ for each $i\in[\ell_0]$ (this is possible since $\lvert f_Q^{-1}(i)\rvert\leq\lvert F_Q\rvert\leq m\leq
  \thk(f_N\rest_W)$). This contradicts the fact that $M$ is a model of $T'=\Forb_{T_\cL}(\cF)$, hence
  $\chi(I)\leq\ell_0$.

  \medskip

  Let us now show that $\chi(I)\geq\ell_0$. If $\ell_0=1$, then the inequality trivially holds, so suppose
  $\ell_0\geq 2$. From the definition of $\ell_0$, there exists
  $Q\in\cT_{\ell_0-1,\cL}^E\setminus\cU_{\ell_0-1}(\cF)$. For every $n\in\NN$, let $f_n\:[n]\to[\ell_0-1]$ be any function
  with $\thk(f_n) = \floor{n/(\ell_0-1)}$ and let $N_n$ be the unique structure on $\cL$ with vertex set $[n]$ that
  is $Q$-uniform with respect to $(f_n,{\leq}\down_{f_n})$, where $\leq$ is the usual order on $[n]$, that is,
  we have
  \begin{align*}
    R_P(N_n) & \df \{\alpha\in ([n])_{k(P)} \mid (f_n\comp\alpha, ({\leq}\down_{f_n})_\alpha)\in Q_P\}.
  \end{align*}
  Our choice of $Q$ ensures that $N_n$ is a model of $T' = \Forb_{T_\cL}(\cF)$.
  
  Since $\thk(f_n) = \floor{n/(\ell_0-1)}$ and $Q\in\cT_{\ell_0-1,\cL}^E$, it follows that $S(A(N_n))$ is isomorphic to
  the Tur\'{a}n graph $T_{n,\ell_0-1}$, which implies that $I(J^{-1}(N_n))\cong T_{n,\ell_0-1}$, so
  by~\eqref{eq:abstractchromatic}, we have $\chi(I)\geq\ell_0$.

  This concludes the proof of~\eqref{eq:Ramseyabstractchromaticmin}.

  \medskip

  Finally, let us consider the case when $T$ is itself obtained from $\TGraph\cup T$ by adding axioms and $I$
  acts identically on the predicate symbol $E$ of $\TGraph$. To apply the previous case of the theorem, note
  that to form $\TGraph\cup T$, we add a new predicate symbol $E'$ corresponding to the new copy of $\TGraph$
  and the theory $T'$ is defined from $\TGraph\cup T$ by adding the axiom
  \begin{align*}
    \forall x\forall y, E'(x,y) & \tot E(x,y).
  \end{align*}
  But then the isomorphism $J\:T'\leadsto T$ simply copies $E$ to $E'$, which means that we can replace $T'$
  with $T$ and use $E$ from $T$ in place of the newly added $E'$ from $T'$.
\end{proofof}

\begin{remark}
  One of the consequences of Theorem~\ref{thm:Ramseyabstractchromatic} is that to compute $\chi(I)$, models
  $F\in\cF$ such that the graph part $I(J^{-1}(F))$ contains an induced copy of $\overline{P}_3$ (the graph on
  $3$ vertices with exactly $1$ edge) are completely irrelevant as such models are never uniform for
  complete patterns nor for Tur\'{a}n patterns.
\end{remark}

\begin{proofof}{Remark~\ref{rmk:Ramseyabstractchromatic}}
  We want to show that the set
  \begin{align*}
    X & \df \{\ell\in\NN_+ \mid\cT_{\ell,\cL}^E\subseteq\cU_\ell(\cF)\}
  \end{align*}
  in~\eqref{eq:Ramseyabstractchromaticmin} is either empty or an infinite interval of $\NN_+$. To show this,
  it is enough to show that if $\ell\in\NN_+\setminus X$ and $\ell'\in[\ell]$, then $\ell'\notin X$. But if
  $\ell\in \NN_+\setminus X$ then there exists $Q\in\cT_{\ell,\cL}^E\setminus\cU_\ell(\cF)$. Let then
  $Q'\in\cT_{\ell',\cL}^E$ be given by
  \begin{align*}
    Q'_P & \df \{(f,{\preceq})\in Q_P \mid \im(f)\subseteq[\ell']\} & (P\in\cL),
  \end{align*}
  where we reinterpret functions $f\:[k(P)]\to[\ell]$ with $\im(f)\subseteq[\ell']$ as
  $f\:[k(P)]\to[\ell']$. We claim that $Q'\notin\cU_{\ell'}(\cF)$. Indeed, if $F\in\cF$ was $Q'$-uniform with
  respect to some $(f,{\preceq})\in\cS_{\ell',V(F)}$, then it would also be $Q$-uniform with respect to
  $(\widehat{f},{\preceq})$, where $\widehat{f}$ is obtained from $f$ by simply extending the codomain to
  $[\ell]$. Hence $\ell'\notin X$.

  Since $X$ is either empty or an infinite interval of $\NN_+$, it follows that $\inf X = \sup\NN\setminus X +
  1$. If we further assume that $\chi(I) < \infty$, then $X$ is non-empty so $\min X = \max\NN\setminus X + 1$,
  hence~\eqref{eq:Ramseyabstractchromaticmin} and~\eqref{eq:Ramseyabstractchromaticmax} are equal.
\end{proofof}

Before showing Theorem~\ref{thm:computability}, let us first address a small technicality on axiomatization of
universal theories.

\begin{lemma}\label{lem:axioms}
  If $T$ be a universal theory that is finitely axiomatizable, then it has a finite axiomatization in which
  all of its axioms are universal. Furthermore, such finite axiomatization with universal axioms can be
  algorithmically computed from any finite axiomatization of $T$.
\end{lemma}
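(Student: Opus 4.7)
The plan is to prove existence by a short compactness argument and then give the effective version by dovetailing enumerations of universal consequences.

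For existence, let $\Phi = \{\phi_1, \ldots, \phi_n\}$ be the given finite axiomatization of $T$ and set $\phi \df \phi_1 \land \cdots \land \phi_n$. Since $T$ is universal, it admits \emph{some} axiomatization $\Psi$ whose members are all universal formulas (though $\Psi$ may be infinite). Because $\Psi$ and $\{\phi\}$ axiomatize the same theory, we have $\Psi \vdash \phi$, and the compactness theorem produces a finite $\Psi_0 \subseteq \Psi$ with $\Psi_0 \vdash \phi$. Conversely, each $\psi \in \Psi_0$ is an axiom of $T$, so $\{\phi\} \vdash \psi$. Hence $\Psi_0$ and $\{\phi\}$ axiomatize the same theory, and $\Psi_0$ is the desired finite universal axiomatization.

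For the algorithmic part, the key point is that by the completeness theorem, entailment from a finite set of first-order sentences over a finite relational language is recursively enumerable. The algorithm dovetails two searches. The first enumerates the universal sentences over $\cL$ derivable from $\Phi$, producing an infinite list $\psi_1, \psi_2, \ldots$. The second, for every $k \in \NN_+$, launches in parallel a search for a formal proof of $\phi$ from $\{\psi_1, \ldots, \psi_k\}$. The algorithm outputs the first $\{\psi_1, \ldots, \psi_k\}$ for which such a proof is found. Termination is guaranteed by the existence argument above: the set $\Psi_0$ produced there eventually lies inside some initial segment $\{\psi_1, \ldots, \psi_k\}$, at which point $\phi$ becomes derivable from that segment and the dovetailed search succeeds.

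The main (and essentially only) subtlety is that first-order provability is merely semidecidable, so we cannot test candidate sets sequentially for adequacy; the remedy is exactly the parallel/dovetailed search, whose correctness and halting are underwritten by the compactness-based existence result of the first paragraph.
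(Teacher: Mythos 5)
Your proposal is correct and follows the same two-step strategy as the paper: existence via compactness applied to the universal consequences of $T$, and computability via a dovetailed parallel search over enumerated universal theorems and candidate proofs. The only cosmetic difference is that you invoke an arbitrary universal axiomatization $\Psi$ of $T$ while the paper works directly with the set of all universal theorems of $T$; both yield the same finite $\Psi_0$ by compactness, and both termination arguments rest on each member of $\Psi_0$ being enumerable as a universal consequence of the given finite axiom list.
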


\begin{proof}
  Let $A$ be a finite list of axioms of $T$. Since $T$ is universal, the set $S$ of all universal formulas that
  are theorems of $T$ is an axiomatization of $T$, hence $S\vdash\bigwedge_{\phi\in A} \phi$, which implies
  that there must exist a finite set $S'$ such that $S'\vdash\bigwedge_{\phi\in A} \phi$, so $S'$ is a finite
  axiomatization of $T$ by universal formulas.

  To algorithmically compute $S'$ as above, we can enumerate all universal formulas $\phi$ that are theorems
  of $T$ in parallel (by also enumerating possible proofs of $\phi$ from $A$ in parallel) and also check in
  parallel whether finite subsets $S'$ of the $S$ enumerated so far satisfy $S'\vdash\bigwedge_{\phi\in
    A}\phi$ (by also enumerating possible proofs in parallel). The reasoning above shows that such algorithm
  must eventually find a satisfying $S'$.
\end{proof}

\begin{proofof}{Theorem~\ref{thm:computability}}
  Using the notation of Theorem~\ref{thm:Ramseyabstractchromatic}, note that the fact that $T$ is finitely
  axiomatizable implies that $T'$ is also finitely axiomatizable and the list of axioms of $T'$ can trivially
  be computed from the list of axioms of $T$ and a description of $I$. By Lemma~\ref{lem:axioms}, we may
  compute an axiomatization $A$ of $T'$ in which every axiom is a universal formula.

  Let $k$ be the maximum number of variables appearing in an axiom in $A$ and let $\cF$ be the (finite) set of
  all canonical structures $M$ on $\cL$ with vertex set $[t]$ for some $t\leq k$ that are not models of
  $T'$. Our choice of $k$ ensures that $T' = \Forb_{T_\cL}(\cF)$. We then check if
  $\cC_\cL^E\subseteq\cU_1(\cF)$. If this is false, then Theorem~\ref{thm:Ramseyabstractchromatic} guarantees
  that $\chi(I) = \infty$. Otherwise, we know that $\chi(I) < \infty$ and is given
  by~\eqref{eq:Ramseyabstractchromaticmin}, which means that we can compute it by finding the smallest
  $\ell\in\NN_+$ such that $\cT_{\ell,\cL}^E\subseteq\cU_\ell(\cF)$; Theorem~\ref{thm:Ramseyabstractchromatic}
  ensures that such $\ell$ exists and is precisely $\chi(I)$.

  Finally, we can compute $\pi^t_I$ from $\chi(I)$ and $t$ using
  formula~\eqref{eq:Turandensityabstractchromatic} in Theorem~\ref{thm:abstractchromatic}. Note that this is a
  valid algorithm as all sets and searches above are finite.
\end{proofof}

\section{The non-induced case}
\label{sec:noninduced}

In this section, we prove Theorems~\ref{thm:noninduced} and~\ref{thm:Ramseynoninduced}, which provide simpler
formulas for the abstract chromatic number in the setting of graphs with extra structure with some forbidden
submodels that are non-induced in the graph part.

For this section, let us fix a language $\cL$, let $E$ be the predicate symbol of $\TGraph$ in the language
$\cL\cup\{E\}$ of $\TGraph\cup T_\cL$, let $J\:T_\cL\leadsto\TGraph\cup T_\cL$ be the structure-erasing
interpretation and let $\cF$ be a family of models of $\TGraph\cup T_\cL$.

\begin{proofof}{Theorem~\ref{thm:noninduced}}
  Let $\ell_0$ be the right-hand side of~\eqref{eq:abstractchromaticnoninduced}.

  Note that if $G\in\cM[\TGraph]$ is such that for every $M\in\cM[\Forb_{\TGraph\cup T_\cL}(\cF\up^E)]$, we
  have $I(M)\not\cong G$. Since for $n\df\lvert G\rvert\chi(G)$, we have $T_{n,\chi(G)}\supseteq G$, from the
  definition of $\cF\up^E$, it follows that for every $M\in\cM[\Forb_{\TGraph\cup T_\cL}]$, we have
  $I(M)\not\supseteq T_{n,\chi(G)}$, so by~\eqref{eq:abstractchromatic2} of Lemma~\ref{lem:abstractchromset},
  we have $\chi(I)\leq\ell_0$.

  On the other hand, if $\ell\in\NN_+$ is such that there exists $n\in\NN_+$ such that for all
  $N\in\cM_n[\Forb_{\TGraph\cup T_\cL}(\cF\up^E)]$, we have $T_{n,\ell}\not\subseteq I(N)$, then we must also
  have that $I(N)\not\cong T_{n,\ell}$ for every $N\in\cM[\Forb_{\TGraph\cup T_\cL}(\cF\up^E)]$, hence
  from~\eqref{eq:abstractchromatic2} of Lemma~\ref{lem:abstractchromset} we also get $\ell_0\leq\chi(I)$.
\end{proofof}

To prove Theorem~\ref{thm:Ramseynoninduced}, we first need to relate uniformity of over $\cL\cup\{E\}$ with
uniformity and $E$-proper split orders over $\cL$.

\begin{claim}\label{clm:noninducedcomplete}
  For $Q\in\cC_{\cL\cup\{E\}}^E$, we have $Q\in\cU_1(\cF\up^E)$ if and only if
  $Q\rest_\cL\in\cU_1(J(\cF))$, where $Q\rest_\cL\in\cP_{1,\cL}$ is the restriction of $Q$ to
  $\cL$ and $J(\cF)\df\{J(F) \mid F\in\cF\}$.
\end{claim}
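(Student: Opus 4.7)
The plan is to prove the two implications separately, each by a direct unpacking of definitions. The conceptual content is that an $E$-complete pattern $Q$ imposes no real constraint on the $E$-relation (the condition $Q_E = \cS_{1,2}$ is automatically satisfied by any graph), so uniformity of some $F' \in \cF\up^E$ with respect to $Q$ should reduce to uniformity of the corresponding $\cL$-reduct with respect to $Q\rest_\cL$. The upward closure $\up^E$ provides freedom to modify $E$-edges, and the interpretation $J$ forgets $E$ altogether, so both sides correspond to the same data about the $\cL$-part.

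For the forward direction, I would take a witness $F' \in \cF\up^E$ together with a total order $\preceq$ on $V(F')$ such that $F'$ is $Q$-uniform with respect to $\preceq$. By the definition of $\cF\up^E$ there exists $F \in \cF$ with $V(F) = V(F')$ and $R_P(F) = R_P(F')$ for every $P \in \cL$ (only the edge set of $E$ may differ). Since $J$ is structure-erasing, $R_P(J(F)) = R_P(F) = R_P(F')$ for each $P \in \cL$. Restricting the $Q$-uniformity condition on $F'$ to $P \in \cL$ then exhibits $J(F)$ as $(Q\rest_\cL)$-uniform with respect to the same $\preceq$, so $Q\rest_\cL \in \cU_1(J(F)) \subseteq \cU_1(J(\cF))$.

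For the backward direction, I would start from $F \in \cF$ and $\preceq$ such that $J(F)$ is $(Q\rest_\cL)$-uniform with respect to $\preceq$, and then build a witness $F'$ for $Q \in \cU_1(\cF\up^E)$ by keeping the $\cL$-structure of $F$ and making the $E$-graph complete. Concretely, set $V(F') \df V(F)$, $R_P(F') \df R_P(F)$ for $P \in \cL$, and $R_E(F') \df (V(F))_2$. Since $R_E(F') \supseteq R_E(F)$, the model $F'$ lies in $\cF\up^E$. Uniformity of $F'$ on each $P \in \cL$ follows verbatim from uniformity of $J(F)$, and uniformity on $E$ holds because $E$-completeness gives $Q_E = \cS_{1,2}$, so the defining set $\{\alpha \in (V(F'))_2 \mid {\preceq_\alpha} \in Q_E\}$ equals all of $(V(F'))_2 = R_E(F')$.

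There is no serious obstacle; the only thing to be careful about is that $\cF\up^E$ is defined to contain the full range of possible edge-enlargements of elements of $\cF$, in particular the one that makes the $E$-graph complete, which is the specific witness we need for the backward direction.
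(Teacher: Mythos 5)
Your proof is correct and follows essentially the same route as the paper's: the forward direction restricts a $Q$-uniform witness in $\cF\up^E$ to its $\cL$-part and passes to a preimage in $\cF$, while the backward direction completes the $E$-graph to produce a witness in $\cF\up^E$, using $E$-completeness to make the $E$-uniformity condition trivially satisfied. The only difference is a cosmetic swap of the roles of the names $F$ and $F'$ in the forward direction.
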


\begin{proof}
  Suppose $Q\in\cU_1(\cF\up^E)$, that is, there exists some $F\in\cF\up^E$ and some
  ${\preceq}\in\cS_{1,V(F)}$ such that $F$ is $Q$-uniform with respect to $\preceq$. From the definition of
  $\cF\up^E$, there exists $F'\in\cF$ such that $V(F')=V(F)$, $R_E(F')\subseteq R_E(F)$ and $R_P(F')=R_P(F)$
  for every $P\in\cL$. Since $F$ is $Q$-uniform with respect to $\preceq$, it follows that $J(F) = J(F')$ is
  $Q\rest_\cL$-uniform with respect to $\preceq$, so $Q\rest_\cL\in\cU_1(J(F'))$.

  \medskip

  Suppose now that $Q\rest_\cL\in\cU_1(J(\cF))$, that is, there exists some $F\in\cF$ and some
  ${\preceq}\in\cS_{1,V(F)}$ such that $J(F)$ is $Q\rest_\cL$-uniform with respect to $\preceq$. Let $F'$ be
  defined by $V(F')\df V(F)$, $R_P(F')\df R_P(F)$ for every $P\in\cL$ and $R_E(F')\df (V(F'))_2$. Note that
  $F'\in\cF\up^E$ and $F'$ is $Q$-uniform with respect to $\preceq$, so $Q\in\cU_1(F)$.
\end{proof}

\begin{claim}\label{clm:noninducedTuran}
  For $Q\in\cT_{\ell,\cL\cup\{E\}}^E$, we have $Q\in\cU_\ell(\cF\up^E)$ if and only if
  $Q\rest_\cL\in\chi_\ell^E(\cF)$, where $Q\rest_\cL\in\cP_{\ell,\cL}$ is the restriction of
  $Q$ to $\cL$.
\end{claim}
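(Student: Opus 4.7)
The plan is to imitate the proof of Claim~\ref{clm:noninducedcomplete} given just above, with the only real change being that in the reverse direction we cannot afford to put in all possible edges — the $E$-Tur\'{a}n pattern forces us to add edges only between vertices in different parts of the partition induced by $f$.

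For the forward direction, I assume $Q\in\cU_\ell(\cF\up^E)$ and pick a witness $F\in\cF\up^E$ together with $(f,{\preceq})\in\cS_{\ell,V(F)}$ such that $F$ is $Q$-uniform with respect to $(f,{\preceq})$. By the definition of $\cF\up^E$, there is some $F'\in\cF$ with $V(F')=V(F)$, $R_P(F')=R_P(F)$ for every $P\in\cL$, and $R_E(F')\subseteq R_E(F)$. I claim that $(f,{\preceq})$ is an $E$-proper $Q\rest_\cL$-split ordering of $F'$. The uniformity of $J(F')=J(F)$ against $Q\rest_\cL$ is immediate from restricting the $Q$-uniformity of $F$ to predicate symbols in $\cL$. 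For properness of $f$ as a coloring of $I(F')$, I use that $Q\in\cT_{\ell,\cL\cup\{E\}}^E$ means $R_E(F)=\{\alpha\in(V(F))_2 \mid f\comp\alpha\text{ is injective}\}$, and since $R_E(F')\subseteq R_E(F)$, every edge of $F'$ has endpoints in different $f$-level sets. Hence $Q\rest_\cL\in\chi_\ell^E(F')\subseteq\chi_\ell^E(\cF)$.

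For the reverse direction, I take $F\in\cF$ together with an $E$-proper $Q\rest_\cL$-split ordering $(f,{\preceq})\in\cS_{\ell,V(F)}$ and construct a witness $F'$ showing $Q\in\cU_\ell(\cF\up^E)$. Define $V(F')\df V(F)$, $R_P(F')\df R_P(F)$ for $P\in\cL$, and
\begin{align*}
R_E(F') & \df \{\alpha\in(V(F'))_2 \mid f\comp\alpha\text{ is injective}\}.
\end{align*}
Because $f$ properly colors $I(F)$, every edge of $F$ sits between different $f$-level sets, so $R_E(F)\subseteq R_E(F')$, placing $F'\in\cF\up^E$ with $F$ as witness. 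To check $F'$ is $Q$-uniform with respect to $(f,{\preceq})$: for $P\in\cL$ the required equality is exactly the $Q\rest_\cL$-uniformity of $J(F)=J(F')$; for $E$ it holds by construction together with the definition of $E$-Tur\'{a}n (which says $Q_E$ contains precisely the pairs $(g,{\preceq'})\in\cS_{\ell,2}$ with $g$ injective, so membership depends only on whether $f\comp\alpha$ is injective). This gives $Q\in\cU_\ell(F')\subseteq\cU_\ell(\cF\up^E)$.

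I do not expect any genuine obstacle; the only bookkeeping point worth pausing over is verifying that $(f\comp\alpha,{\preceq_\alpha})$ indeed lies in $\cS_{\ell,2}$ (so that testing membership in $Q_E$ makes sense), which is immediate from $(f,{\preceq})\in\cS_{\ell,V(F)}$ and the remark after the definition of split orders that pulling back along an injective map preserves being a split order.
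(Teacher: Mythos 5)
Your proof is correct and follows essentially the same route as the paper's: in both directions you reuse the argument for Claim~\ref{clm:noninducedcomplete} but, in the backward direction, construct $F'$ by making $R_E(F')$ the set of pairs between distinct $f$-level sets (your ``$f\comp\alpha$ injective'' condition is the same as the paper's $f(\alpha(1))\neq f(\alpha(2))$). The bookkeeping remark at the end is a valid, if minor, observation already covered by the remark after the definition of split orders.
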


\begin{proof}
  Let $I\:\TGraph\leadsto\TGraph\cup T_\cL$ be the structure-erasing interpretation.
  
  Suppose $Q\in\cU_\ell(\cF\up^E)$, that is, there exists some $F\in\cF\up^E$ and some
  $(f,{\preceq})\in\cS_{\ell,V(F)}$ such that $F$ is $Q$-uniform with respect to $(f,{\preceq})$. From the
  definition of $\cF\up^E$, there exists $F'\in\cF$ such that $V(F')=V(F)$, $R_E(F')\subseteq R_E(F)$ and
  $R_P(F')=R_P(F)$ for every $P\in\cL$. Since $F$ is $Q$-uniform with respect to $(f,{\preceq})$, it follows
  that $J(F) = J(F')$ is $Q\rest_\cL$-uniform with respect to $(f,{\preceq})$. Since $Q$ is an $E$-Tur\'{a}n
  pattern, we also get that $f$ is a proper coloring of $I(F)$, hence also of $I(F')$, so
  $Q\rest_\cL\in\chi_\ell^E(F')$.

  \medskip

  Suppose now that $Q\rest_\cL\in\chi_\ell^E(\cF)$, that is, there exists some $F\in\cF$ and some $E$-proper
  $Q\rest_\cL$-split ordering $(f,{\preceq})\in\cS_{\ell,V(F)}$ of $F$. Define $F'$ by letting $V(F')\df
  V(F)$, $R_P(F')\df R_P(F)$ for every $P\in\cL$ and
  \begin{align*}
    R_E(F') & \df \{\alpha\in (V(F'))_2 \mid f(\alpha(1))\neq f(\alpha(2))\}.
  \end{align*}
  Note that since $f$ is a proper coloring of $I(F)$, it follows that $R_E(F')\supseteq R_E(F)$, so
  $F'\in\cF\up^E$. Note also that $F'$ is $Q$-uniform with respect to $(f,{\preceq})$ as $J(F)$ is
  $Q\rest_\cL$-uniform with respect to $(f,{\preceq})$, so $Q\in\cU_\ell(F')$.
\end{proof}

\begin{proofof}{Theorem~\ref{thm:Ramseynoninduced}}
  Note that first that the restriction function $\cC_{\cL\cup\{E\}}^E\to\cP_{1,\cL}$ given by $Q\mapsto
  Q\rest_\cL$ is bijective, so Claim~\ref{clm:noninducedcomplete} implies that
  $\cC_{\cL\cup\{E\}}^E\subseteq\cU_1(\cF)$ is equivalent to
  $\cP_{1,\cL}\subseteq\cU_1(J(\cF))$, so the characterization of $\chi(I) < \infty$ of
  Theorem~\ref{thm:Ramseynoninduced} follows from the characterization of $\chi(I) < \infty$ of
  Theorem~\ref{thm:Ramseyabstractchromatic}.

  On the other hand, the restriction function $\cT_{\ell,\cL\cup\{E\}}^E\to\cP_{\ell,\cL}$ given by $Q\mapsto
  Q\rest_\cL$ is also a bijection. This along with Claim~\ref{clm:noninducedTuran} implies that
  $\cT_{\ell,\cL\cup\{E\}}^E\not\subseteq\cU_\ell(\cF\up^E)$ is equivalent to
  $\cP_{\ell,\cL}^E\not\subseteq\chi_\ell^E(\cF)$, so from~\eqref{eq:Ramseyabstractchromaticmin} of
  Theorem~\ref{thm:Ramseyabstractchromatic}, we get that if $\chi(I) < \infty$,
  then~\eqref{eq:Ramseyabstractchromaticnoninducedinf} holds.

  It remains to prove that~\eqref{eq:Ramseyabstractchromaticnoninducedinf} also holds when $\chi(I) = \infty$,
  that is, we need to show that if $\cP_{1,\cL}\not\subseteq\cU_1(J(\cF))$, then
  $\cP_{\ell,\cL}\not\subseteq\chi_\ell^E(\cF)$ for every $\ell\in\NN_+$.

  Let $Q\in\cP_{1,\cL}\setminus\cU_1(J(\cF))$ and fix $\ell\in\NN_+$. Given
  $(f,{\preceq})\in\cS_{\ell,V}$, let $\preceq^f\in\cS_{1,V}$ be the total order on $V$ given by
  \begin{align*}
    v\preceq^f w & \iff f(v) < f(w)\lor v\preceq w.
  \end{align*}
  Clearly ${\preceq^f}\down_f = {\preceq}$.

  Let $Q'\in\cP_{\ell,\cL}$ be given by
  \begin{align*}
    Q'_P & \df \{(g,{\preceq})\in\cS_{\ell,k} \mid {\preceq^g}\in Q_P\}.
  \end{align*}
  We claim that $Q'\notin\chi_\ell^E(\cF)$. Suppose not, that is, suppose there exists $F\in\cF$
  and an $E$-proper $Q'$-split ordering $(f,{\preceq})\in\cS_{\ell,V(F)}$ of $F$. Note that for every
  $P\in\cL$, we have
  \begin{align*}
    R_P(F)
    & =
    \{\alpha\in (V(F))_{k(P)} \mid (f\comp\alpha, {\preceq_\alpha})\in Q'_P\}
    \\
    & =
    \{\alpha\in (V(F))_{k(P)} \mid ({\preceq_\alpha})^{f\comp\alpha}\in Q_P\}
    \\
    & =
    \{\alpha\in (V(F))_{k(P)} \mid ({\preceq^f})_\alpha\in Q_P\},
  \end{align*}
  hence $J(F)$ is $Q$-uniform with respect to $\preceq^f$, contradicting the fact that
  $Q\notin\cU_1(J(F))$. Hence $Q'\notin\chi_\ell^E(\cF)$ as desired.
\end{proofof}

\begin{proofof}{Remark~\ref{rmk:Ramseynoninduced}}
  In the proof above, we determined that $\cT_{\ell,\cL\cup\{E\}}^E\not\subseteq\cU_\ell(\cF\up^E)$ is
  equivalent to $\cP_{\ell,\cL}^E\not\subseteq\chi_\ell^E(\cF)$, so from
  Remark~\ref{rmk:Ramseyabstractchromatic} it follows that the set $X$
  in~\eqref{eq:Ramseyabstractchromaticnoninducedinf} is either empty or an infinite interval of $\NN_+$ and
  thus $\inf X = \sup\NN_+\setminus X + 1$.
\end{proofof}

\section{Applications to concrete theories}
\label{sec:concrete}

In this section we illustrate how to use the general theory to obtain easier formulas for the abstract
chromatic number for some specific theories. We start with the easy example of recovering the original setting
of Theorem~\ref{thm:ESSAS}: graphs with forbidden non-induced subgraphs.

\begin{proposition}\label{prop:usual}
  Let $\cF$ be a family of graphs and $\Forb_{\TGraph}^+(\cF)$ be the theory of all graphs that do not have
  any non-induced copy of graphs in $\cF$. Then for the axiom-adding interpretation
  $I_\cF^+\:\TGraph\leadsto\Forb_{\TGraph}^+(\cF)$, we have
  \begin{align*}
    \chi(I_\cF^+) & = \max\{\chi(\cF), 1\},
  \end{align*}
  where $\chi(\cF)\df\inf\{\chi(F) \mid F\in\cF\}$ is the infimum of the chromatic numbers of elements of
  $\cF$.
\end{proposition}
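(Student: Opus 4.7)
The plan is to verify the equality directly from the definition of $\chi(I)$, using the reformulation
\begin{align*}
  \chi(I) = \inf\{\ell\in\NN_+ \mid \exists n\in\NN, \forall N\in\cM_n[T], I(N)\not\supseteq T_{n,\ell}\}
\end{align*}
provided by Lemma~\ref{lem:abstractchromset}. The key simplification is that because $I_\cF^+$ is an axiom-adding interpretation, we have $I_\cF^+(N) = N$ as a graph for every model $N$ of $\Forb_{\TGraph}^+(\cF)$, so the statement reduces to an elementary assertion about graphs and Tur\'{a}n graphs.

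For the lower bound $\chi(I_\cF^+)\geq\max\{\chi(\cF),1\}$, the inequality $\chi(I_\cF^+)\geq 1$ is automatic since $0$ always belongs to the set appearing in~\eqref{eq:abstractchromatic}. For any $\ell < \chi(\cF)$ and any $n\in\NN$, I take $N\df T_{n,\ell}$: trivially $T_{n,\ell}\subseteq N$, and $N$ is a model of $\Forb_{\TGraph}^+(\cF)$ because any $F\in\cF$ with $F\subseteq T_{n,\ell}$ would inherit a proper $\ell$-coloring from the complete $\ell$-partite structure of $T_{n,\ell}$, contradicting $\chi(F)\geq\chi(\cF)>\ell$.

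For the upper bound $\chi(I_\cF^+)\leq\max\{\chi(\cF),1\}$, I split into cases. If $\cF = \varnothing$, then $\chi(\cF) = \infty$ and there is nothing to prove. If $\cF$ contains the empty graph $\overline{K}_0$ on zero vertices, then every graph vacuously contains a non-induced copy of $\overline{K}_0$, so $\Forb_{\TGraph}^+(\cF)$ is degenerate and $\chi(I_\cF^+) = 1$, matching the formula since $\chi(\overline{K}_0) = 0$. Otherwise, write $k\df\chi(\cF)\geq 1$ and choose $F\in\cF$ realizing $\chi(F) = k$: since $F$ admits a proper $k$-coloring, it embeds non-induced-ly into $T_{n,k}$ for any $n\geq k\lvert F\rvert$, and for such $n$ every graph $N$ with $T_{n,k}\subseteq N$ then also contains $F$ non-induced-ly, so no model of $\Forb_{\TGraph}^+(\cF)$ of size $n$ works, yielding $\chi(I_\cF^+)\leq k$ by the reformulation above.

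No step presents a substantive obstacle. The only care needed is in handling the two degenerate edge cases $\cF = \varnothing$ and $\overline{K}_0\in\cF$, which are precisely what force the appearance of the $\max(\cdot,1)$ in the statement rather than a bare $\chi(\cF)$.
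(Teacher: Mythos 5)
Your proof is correct, and it takes a genuinely different route from the paper's. The paper derives Proposition~\ref{prop:usual} as a corollary of the Ramsey-based machinery (Theorem~\ref{thm:Ramseynoninduced}) specialized to the empty language $\cL=\varnothing$: there each $\cP_{\ell,\cL}$ is a singleton, and the unique $\ell$-Ramsey pattern lies in $\chi_\ell^E(F)$ precisely when $F$ is properly $\ell$-colorable, which collapses the general formula to $\max\{\chi(\cF),1\}$. Your argument bypasses all of that and works directly from the definition via Lemma~\ref{lem:abstractchromset}, exploiting only the observation that an axiom-adding interpretation acts as the identity on models, that $T_{n,\ell}$ is itself $\cF$-free when $\ell<\chi(\cF)$ (giving the lower bound), and that a $k$-chromatic $F\in\cF$ embeds non-inducedly into $T_{n,k}$ for $n\ge k\lvert F\rvert$ (giving the upper bound). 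Your approach is more elementary and self-contained, and it makes the content of the proposition transparent; the paper's approach trades that transparency for a demonstration that the general theorem specializes correctly, which is the point of presenting Proposition~\ref{prop:usual} as the warm-up application in Section~\ref{sec:concrete}. Your handling of the edge cases ($\cF=\varnothing$ giving $\chi(\cF)=\infty$, and $\overline{K}_0\in\cF$ forcing degeneracy) is careful and correct, and indeed exactly explains why the statement carries the extra $\max\{\cdot,1\}$.
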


\begin{proof}
  Let $\cL\df\varnothing$ be the empty language and note that in the notation of Theorem~\ref{thm:Ramseynoninduced}
  we have $\Forb_{\TGraph}^+(\cF) = \Forb_{\TGraph\cup T_\cL}(\cF\up^E)$, so we get
  \begin{align*}
    \chi(I_\cF^+)
    & =
    \sup\{\ell\in\NN_+ \mid
    \cP_{\ell,\cL}\not\subseteq\chi_\ell^E(\cF)\}\cup\{0\} + 1.
  \end{align*}
  But since $\cL$ is empty, each $\cP_{\ell,\cL}$ has a unique element (namely, the empty pattern) and this
  unique element is in $\chi_\ell^E(F)$ if and only if there exists a proper coloring of $F$ with $\ell$
  colors, hence
  \begin{align*}
    \chi(I_\cF^+)
    & =
    \sup\{\ell\in\NN_+ \mid \forall F\in\cF, \ell < \chi(F)\}\cup\{0\} + 1
    =
    \max\{\chi(\cF), 1\},
  \end{align*}
  as desired.
\end{proof}

We now show how the picture changes when the forbidden subgraphs are induced instead of non-induced.

\begin{proposition}
  Let $\cF$ be a family of graphs and let $I_\cF\:\TGraph\leadsto\Forb_{\TGraph}(\cF)$ be the axiom-adding
  interpretation. If $\cF$ contains a complete graph, then
  \begin{align*}
    \chi(I_\cF)
    & =
    \max\{\ell\in\NN_+ \mid \cF\text{ does not contain a complete $\ell$-partite graph}\}\cup\{0\} + 1
    \\
    & =
    \min\{\ell\in\NN_+ \mid \cF\text{ contains a complete $\ell$-partite graph}\};
  \end{align*}
  otherwise, we have $\chi(I_\cF) = \infty$.
\end{proposition}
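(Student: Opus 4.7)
The plan is to apply Theorem~\ref{thm:Ramseyabstractchromatic} directly: since $I_\cF$ is axiom-adding and acts identically on $E$, the ``Furthermore'' clause lets us take $T' = T = \Forb_\TGraph(\cF)$ with language $\cL = \{E\}$. A corresponding family $\cF'$ with $T = \Forb_{T_\cL}(\cF')$ consists of all canonical $\{E\}$-structures not in $T$, namely graphs containing an induced copy of some $F\in\cF$ together with various asymmetric digraph structures; however, the uniformities we will invoke can only be satisfied by symmetric graphs, so the asymmetric members of $\cF'$ are irrelevant.

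First I would identify the patterns explicitly. Since $\cL$ contains only the single binary symbol $E$, there is a unique $E$-complete $1$-Ramsey pattern $Q^*$ (namely $Q^*_E=\cS_{1,2}$) and, for each $\ell\in\NN_+$, a unique $E$-Tur\'{a}n $\ell$-Ramsey pattern $Q^{(\ell)}$ (namely $Q^{(\ell)}_E=\{(g,{\preceq})\in\cS_{\ell,2}\mid g\text{ injective}\}$). Unraveling $Q$-uniformity: a canonical $\{E\}$-structure $M$ is $Q^*$-uniform with respect to some total order iff $M$ is a complete graph, and $M$ is $Q^{(\ell)}$-uniform with respect to some $(f,{\preceq})\in\cS_{\ell,V(M)}$ iff $M$ is a complete multipartite graph whose (possibly empty) parts are the level sets of $f$ --- equivalently, $M$ is complete multipartite with $\chi(M)\leq\ell$.

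Plugging these characterizations into~\eqref{eq:Ramseyabstractchromaticmin}: $\cC_\cL^E\subseteq\cU_1(\cF')$ holds iff $\cF$ contains a complete graph (any induced complete subgraph of $K_r$ is itself a complete graph lying in $\cF$), so if no such graph lies in $\cF$ then $\chi(I_\cF)=\infty$. Otherwise $\chi(I_\cF)$ equals the smallest $\ell\in\NN_+$ for which $\cF$ contains a complete multipartite graph of chromatic number at most $\ell$; this is precisely the second displayed formula of the proposition, under the convention that ``complete $\ell$-partite'' allows empty parts. The first displayed formula is then equivalent by Remark~\ref{rmk:Ramseyabstractchromatic}, since the set of such $\ell$ forms an upper interval of $\NN_+$. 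The main point requiring care is the empty-parts convention, which is forced by the fact that $f$ need not be surjective in the definition of $Q^{(\ell)}$-uniformity; everything else is direct bookkeeping once $Q^*$ and $Q^{(\ell)}$ are unpacked.
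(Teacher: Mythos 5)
Your proposal is correct and follows essentially the same route as the paper's own proof: both invoke the ``Furthermore'' clause of Theorem~\ref{thm:Ramseyabstractchromatic} to set $T' = T = \Forb_{\TGraph}(\cF)$ with $\cL = \{E\}$, identify the unique $E$-complete and $E$-Tur\'{a}n patterns, translate $Q$-uniformity into ``complete graph'' and ``complete $\ell$-partite graph'' respectively, and then read off both displayed formulas from \eqref{eq:Ramseyabstractchromaticmin} and \eqref{eq:Ramseyabstractchromaticmax}. Your remark distinguishing the implicit family $\cF'$ (which includes asymmetric structures) from the given $\cF$, and noting that only symmetric structures can be uniform for these patterns, makes explicit a detail the paper handles silently but is not a different approach.
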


\begin{proof}
  In the notation of Theorem~\ref{thm:Ramseyabstractchromatic}, we can view $\Forb_{\TGraph}(\cF)$ as obtained
  from the theory $\TGraph\cup T_0$ by adding axioms, where $T_0$ is the trivial theory over the empty
  language. Then taking $T' = T$ (so $\cL=\{E\}$), note that $\cC_\cL^E$ contains a single element $Q_0$ and
  we have $Q_0\in\cU_1(F)$ if and only if $F$ is complete, so Theorem~\ref{thm:Ramseyabstractchromatic} gives
  $\chi(I_\cF) < \infty$ if and only if $\cF$ has a complete graph.

  Suppose then that $\cF$ contains a complete graph (so $\chi(I_\cF) < \infty$) and note that for every
  $\ell\in\NN_+$, $\cT_{\ell,\cL}^E$ also contains a single element $Q_\ell$ and we have
  $Q_\ell\in\cU_\ell(F)$ if and only if $F$ is a complete $\ell$-partite graph, hence
  from~\eqref{eq:Ramseyabstractchromaticmax} and~\eqref{eq:Ramseyabstractchromaticmin}, we get
  \begin{align*}
    \chi(I_\cF)
    & =
    \max\{\ell\in\NN_+ \mid \cF\text{ does not contain a complete $\ell$-partite graph}\}\cup\{0\} + 1
    \\
    & =
    \min\{\ell\in\NN_+ \mid \cF\text{ contains a complete $\ell$-partite graph}\},
  \end{align*}
  as desired.
\end{proof}

For our next example, we will recover the interval chromatic number used for ordered graphs in~\cite{PT06}
from our result.

\begin{definition}[Interval chromatic number~\cite{PT06}]
  An \emph{ordered graph} is a model of the theory $\TGraph\cup\TLinOrder$. A \emph{proper interval coloring}
  of an ordered graph $G$ is a proper coloring of the graph part of $G$ such that each color class is an
  interval of the order part of $G$. Formally, a proper interval coloring of $G$ is a function
  $f\:V(G)\to[\ell]$ such that
  \begin{gather*}
    \forall v,w\in V(G), (v,w)\in R_E(G)\implies f(v)\neq f(w);
    \\
    \forall u,v,w\in V(G), (u,v)\in R_<(G)\land (v,w)\in R_<(G)\land f(u) = f(w) \implies f(u) = f(v).
  \end{gather*}

  The \emph{interval chromatic number} $\chi_<(G)$ of an ordered graph $G$ is the minimum $\ell$ such that
  there exists a proper interval coloring of $G$ of the form $f\:V(G)\to[\ell]$.
\end{definition}

\begin{proposition}\label{prop:ordered}
  Let $\cF$ be a family of ordered graphs and $\Forb_{\TGraph\cup\TLinOrder}^+(\cF)$ be the theory of all
  ordered graphs that do not have any non-induced copy of ordered graphs in $\cF$. Then for the axiom-adding
  interpretation $I_\cF^<\:\TGraph\leadsto\Forb_{\TGraph\cup\TLinOrder}^+(\cF)$, we have
  \begin{align*}
    \chi(I_\cF^<) & = \max\{\chi_<(\cF), 1\},
  \end{align*}
  where $\chi_<(\cF)\df\inf\{\chi_<(F) \mid F\in\cF\}$ is the infimum of the interval chromatic numbers of
  elements of $\cF$.
\end{proposition}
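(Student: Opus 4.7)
The plan is to apply Theorem~\ref{thm:Ramseyabstractchromatic} in its ``furthermore'' form: the theory $T := \Forb_{\TGraph \cup \TLinOrder}^+(\cF)$ already contains $\TGraph$ and $I_\cF^<$ acts identically on $E$, so we may take $T' = T$ with language $\cL = \{E,<\}$. Writing $T = \Forb_{T_\cL}(\cF'')$ for $\cF'' := \cM[T_\cL]\setminus\cM[T]$, the family $\cF''$ consists of three kinds of structures: those with non-symmetric $E$, those whose $<$-reduct is not a linear order, and ordered graphs containing some $F\in\cF$ as a non-induced ordered subgraph. A quick check verifies $\cC_\cL^E\subseteq\cU_1(\cF'')$: among the four patterns in $\cC_\cL^E$, the ones with $Q_< = \varnothing$ or $Q_< = \{{\leq},{\geq}\}$ admit $Q$-uniform structures on $\geq 2$ vertices that violate $\TLinOrder$, and the ones with $Q_< = \{{\leq}\}$ or $Q_< = \{{\geq}\}$ are realized by a complete ordered graph of size at least $\max_{F\in\cF}\lvert F\rvert$, which contains every $F\in\cF$ non-inducedly. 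So $\chi(I_\cF^<) < \infty$ and we are reduced to computing $\min\{\ell\in\NN_+ \mid \cT_{\ell,\cL}^E\subseteq\cU_\ell(\cF'')\}$.

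The technical heart is classifying patterns $Q\in\cT_{\ell,\cL}^E$ according to whether $Q_<$ is \emph{linear-order-compatible}: for every distinct $i,j\in[\ell]$ exactly one of the two elements $(g,{\preceq_0})\in\cS_{\ell,2}$ with $\{g(1),g(2)\}=\{i,j\}$ lies in $Q_<$, these cross-class choices collectively define a linear order $\vartriangleleft$ on $[\ell]$, and for every $i\in[\ell]$ exactly one of the two elements of $\cS_{\ell,2}$ with $g\equiv i$ lies in $Q_<$. Whenever $Q_<$ fails this property, every $Q$-uniform structure on $\geq 2$ vertices violates $\TLinOrder$, so it lies in $\cF''$ automatically and hence $Q\in\cU_\ell(\cF'')$ with no further work.

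When $Q_<$ is linear-order-compatible, a $Q$-uniform $M$ with respect to $(f,{\preceq})$ is precisely a complete $\ell$-partite ordered graph whose color classes $f^{-1}(i)$ are intervals under the induced lex-order on $V(M)$. The key equivalence I would establish is: such an $M$ of sufficiently large size contains a non-induced copy of some $F\in\cF$ if and only if $\chi_<(\cF)\leq\ell$. For the forward direction, an order-preserving edge-to-edge embedding $\phi\:V(F)\rightarrowtail V(M)$ makes $f_M\comp\phi$ a proper coloring (edges of $F$ land between different $f_M$-classes by the Tur\'{a}n pattern) whose classes are intervals of $V(F)$ (since $\phi$ is order-preserving and preimages of intervals under an order-preserving map are intervals), so $\chi_<(F)\leq\ell$. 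For the reverse direction, given a proper interval coloring of some $F\in\cF$ with at most $\ell$ colors, relabel its colors so that the intervals appear in the order prescribed by $\vartriangleleft$, and embed each color class of $F$ into a suitably chosen subset of the corresponding class of $M$.

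Combining the two cases yields $\cT_{\ell,\cL}^E\subseteq\cU_\ell(\cF'')$ if and only if $\chi_<(\cF)\leq\ell$, and therefore $\chi(I_\cF^<) = \min\{\ell\in\NN_+ \mid \chi_<(\cF)\leq\ell\} = \max\{\chi_<(\cF),1\}$. The main obstacle is the bookkeeping in the reverse embedding: one must align $F$'s interval coloring with the color order $\vartriangleleft$ via a color relabeling, and then verify that the within-part ordering direction prescribed by $Q_<$ poses no real obstruction, since each part of $M$ is a linearly ordered set whose size we may take arbitrarily large, and so any finite linearly ordered subset (in particular each color class of $F$ with its inherited order) can be realized as a subsequence regardless of the prescribed direction.
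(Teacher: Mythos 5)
Your proposal is correct, but it routes through Theorem~\ref{thm:Ramseyabstractchromatic} directly rather than through the non-induced specialization Theorem~\ref{thm:Ramseynoninduced}, which is what the paper uses. Concretely, the paper sets $\cL=\{<\}$, encodes the theory as $\Forb_{\TGraph\cup T_{\{<\}}}(\widehat{\cF}\up^E)$ for the explicit finite augmentation $\widehat{\cF}=\cF\cup\{F_1,F_2,F_3\}$ (two small structures that kill totality and antisymmetry of $<$ and a third killing transitivity), and then analyzes $\cP_{\ell,\{<\}}$ against $\chi_\ell^E(\widehat{\cF})$; the $E$-coloring constraint is completely absorbed into the $E$-proper split-ordering machinery, so only the $<$-component of the pattern is ever discussed, and the pattern classification is organized around an associated tournament $T_Q$ on $[\ell]$. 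You instead take $\cL=\{E,<\}$, $T'=T$, write $T=\Forb_{T_\cL}(\cF'')$ for the full non-model family $\cF''=\cM[T_\cL]\setminus\cM[T]$, and classify Tur\'{a}n patterns $Q\in\cT_{\ell,\cL}^E$ by whether $Q_<$ is ``linear-order-compatible.'' Both routes reduce to the same final equivalence with $\chi_<$, and the bifurcation you identify (non-compatible $Q_<$ yields automatic membership in $\cU_\ell(\cF'')$; compatible $Q_<$ reduces to whether a complete $\ell$-partite interval-ordered graph swallows some $F\in\cF$) is exactly the content that the paper's empty/full-pair and $T_Q$-transitivity checks capture. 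Your route saves having to exhibit $F_1,F_2,F_3$ explicitly, at the cost of carrying $Q_E$ along and of arguing about the opaque family $\cF''$ rather than a concrete $\widehat{\cF}$; the paper's route buys the reverse trade and is closer to how a reader would reuse the non-induced corollary.

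Two small imprecisions to fix. First, the claim that when $Q_<$ is not linear-order-compatible ``every $Q$-uniform structure on $\geq 2$ vertices violates $\TLinOrder$'' is too strong: a transitivity failure lives on three parts of the tournament, so a $2$-vertex uniform structure (or one missing the relevant parts) need not violate anything. What you actually need, and what is true, is that \emph{some} $Q$-uniform structure of small size (at most $3$) is already a non-model of $\TLinOrder$, hence lies in $\cF''$; that single witness gives $Q\in\cU_\ell(\cF'')$. Second, $\max_{F\in\cF}\lvert F\rvert$ need not exist (and you need to treat $\cF=\varnothing$ separately, where $\chi_<(\cF)=\infty$ matches $\chi(I_\cF^<)=\infty$); for the finiteness check it suffices to take a complete ordered graph of size $\lvert F\rvert$ for any one fixed $F\in\cF$, since the unique order-preserving bijection into a complete ordered graph is automatically edge-preserving.
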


\begin{proof}
  Let $\cL\df\{<\}$ with $k({<})\df 2$ and let further $F_1,F_2,F_3$ be the structures on $\{E\}\cup\cL$ defined
  by
  \begin{align*}
    \begin{aligned}
      V(F_1) & \df [2];\\
      R_E(F_1) & \df \varnothing;\\
      R_<(F_1) & \df \varnothing;
    \end{aligned}
    & &
    \begin{aligned}
      V(F_2) & \df [2];\\
      R_E(F_2) & \df \varnothing;\\
      R_<(F_2) & \df ([2])_2;
    \end{aligned}
    & &
    \begin{aligned}
      V(F_3) & \df [3];\\
      R_E(F_3) & \df \varnothing;\\
      R_<(F_3) & \df \{(1,2),(2,3),(3,1)\};
    \end{aligned}
  \end{align*}
  Define also $\widehat{\cF}\df\cF\cup\{F_1,F_2,F_3\}$ and note that in the notation of
  Theorem~\ref{thm:Ramseynoninduced}, we have $\Forb_{\TGraph\cup\TLinOrder}^+(\cF) = \Forb_{\TGraph\cup
    T_\cL}(\widehat{\cF}\up^E)$, so we get
  \begin{align*}
    \chi(I_\cF^<)
    & =
    \sup\{\ell\in\NN_+ \mid
    \cP_{\ell,\cL}\not\subseteq\chi_\ell^E(\widehat{\cF})\}\cup\{0\} + 1.
  \end{align*}

  For $i,j\in[\ell]$, let $\cS_{\ell,i,j}\df\{(f,{\preceq})\in\cS_{\ell,2} \mid \im(f)=\{i,j\}\}$. Note that
  $\cS_{\ell,i,j} = \cS_{\ell,j,i}$ and, regardless of whether $i\neq j$, we have $\lvert\cS_{\ell,i,j}\rvert =
  2$.

  Fix an $\ell$-Ramsey pattern $Q\in\cP_{\ell,\cL}$ on $\cL$. Let us call a pair $(i,j)\in[\ell]^2$
  \emph{empty in $Q$} if $\cS_{\ell,i,j}\cap Q_< = \varnothing$ and let us call $(i,j)$ \emph{full in $Q$} if
  $\cS_{\ell,i,j}\subseteq Q_<$.

  Note that if $(i,j)$ is empty in $Q\in\cP_{\ell,\cL}$, then any $(f,{\preceq})\in\cS_{\ell,2}$ with
  $\im(f)=\{i,j\}$ is an $E$-proper $Q$-split ordering of $F_1$. Conversely, note that if $(f,{\preceq})$ is
  an $E$-proper $Q$-split ordering of $F_1$ then $(f(1),f(2))$ is empty in $Q$. Hence
  $Q\in\cP_{\ell,\cL}$ has an empty pair if and only if $Q\in\chi_\ell^E(F_1)$. With an analogous argument, we
  can show that $Q\in\cP_{\ell,\cL}$ has a full pair if and only if $Q\in\chi_\ell^E(F_2)$.

  Let then $\cP_{\ell,\cL}'$ be the set of all $\ell$-Ramsey patterns that do not have any empty pairs nor any
  full pairs. To each $Q\in\cP_{\ell,\cL}'$, let us associate a tournament $T_Q$ given by $V(T_Q)\df[\ell]$
  and
  \begin{align*}
    E(T_Q)
    & \df
    \{(v,w)\in [\ell]^2 \mid v\neq w\land \exists (f,{\preceq})\in Q_<, f(1) = v\land f(2) = w\}.
  \end{align*}
  Note that the fact that $Q$ does not have any empty or full pairs ensures that $T_Q$ is indeed a tournament.

  We claim that for $Q\in\cP_{\ell,\cL}$, the tournament $T_Q$ has a cycle if and only if $Q\in\chi_\ell^E(F_3)$.

  For the forward direction, since $T_Q$ has a cycle, it must have a $3$-cycle, say $(u,v,w)\in[\ell]^3$ with
  $(u,v),(v,w),(w,u)\in E(T_Q)$. Then any $(f,{\preceq})\in\cS_{\ell,3}$ with $f(1) = u$, $f(2) = v$ and $f(3)
  = w$ is an $E$-proper $Q$-split ordering of $F_3$. For the backward direction, if
  $(f,{\preceq})\in\cS_{\ell,3}$ is an $E$-proper $Q$-split ordering of $F_3$, then $(f(1),f(2),f(3))$ is a
  $3$-cycle in $T_Q$.

  Let then $\cP_{\ell,\cL}''\df\{Q\in\cP_{\ell,\cL}' \mid T_Q\text{ is transitive}\}$ and note that our claims
  above show that
  \begin{align*}
    \chi(I_\cF^<)
    & =
    \sup\{\ell\in\NN_+ \mid
    \cP_{\ell,\cL}''\not\subseteq\chi_\ell^E(\cF)\}\cup\{0\} + 1.
  \end{align*}

  We now claim that for $Q\in\cP_{\ell,\cL}''$ and $F\in\cF$, we have $Q\in\chi_\ell^E(F)$ if and only if
  $\ell\geq\chi_<(F)$.

  For the forward direction, we claim that if $(f,{\preceq})\in\cS_{\ell,V(F)}$ is an $E$-proper $Q$-split
  ordering of $F$, then $f\:V(F)\to[\ell]$ is a proper interval coloring of $F$. Since $f$ is a proper
  coloring of the graph part of $F$, we need to show that its color classes are intervals of the order part
  of $F$. Suppose not, that is, suppose there exist $u,v,w\in V(F)$ such that $(u,v),(v,w)\in R_<(F)$ and
  $f(u) = f(w)\neq f(v)$. But then $(u,v)\in R_<(F)$ implies $(f(u),f(v))\in E(T_Q)$ and $(v,w)\in R_<(F)$
  implies $(f(v),f(w))\in E(T_Q)$, contradicting the fact that $T_Q$ does not have anti-parallel edges.

  For the backward direction, suppose $f\:V(F)\to[\ell]$ is a proper interval coloring of $F$. Since $T_Q$ is
  transitive, by possibly permuting the colors of $f$, we may suppose that the color classes of $f$ are in the
  same order in $F$ as the colors are in $T_Q$, that is, we may suppose that
  \begin{align}\label{eq:fTQ}
    \forall v,w\in V(F), (f(v)\neq f(w) & \to ((v,w)\in R_<(F)\tot(f(v),f(w))\in E(T_Q))).
  \end{align}
  For $i\in[\ell]$, let $(g_i,{\leq})\in\cS_{\ell,2}$ be the $\ell$-split order over $[2]$ given by $g_i(1) =
  g_i(2) = i$ and $1\leq 2$. Define the partial order $\preceq$ over $V(F)$ as
  \begin{align*}
    v\preceq w & \iff f(v) = f(w)\land ((v,w)\in R_<(F)\tot (g_{f(v)},{\leq})\in Q_<).
  \end{align*}
  It is clear that $(f,{\preceq})$ is an $\ell$-split order over $V(F)$.
  
  We claim that $(f,{\preceq})$ is an $E$-proper $Q$-split order of $F$. We know that $f$ is a proper coloring
  of the graph part of $F$, so we need to show that the order part of $F$ is $Q$-uniform with respect to
  $(f,{\preceq})$. But this follows from the definition of $\preceq$ and~\eqref{eq:fTQ}; this concludes the
  proof of our claim.

  From our claim, it follows that
  \begin{align*}
    \chi(I_\cF^<)
    & =
    \sup\{\ell\in\NN_+ \mid \forall F\in\cF, \ell < \chi_<(F)\}\cup\{0\} + 1
    \\
    & =
    \max\{\chi_<(\cF), 1\},
  \end{align*}
  as desired.
\end{proof}

Let us note that the result of~\cite{BKV03} that proves an analogue of Theorem~\ref{thm:ESSAS} in terms of the
cyclic interval chromatic number (which has the same definition as the interval chromatic number, but
intervals are considered in the cyclic order) can also be retrieved from Theorem~\ref{thm:Ramseynoninduced}
with a similar proof to that of Proposition~\ref{prop:ordered}.

Finally, the result of~\cite{GMNPTV19} that proves an analogue of Theorem~\ref{thm:ESSAS} in terms of the
edge-order chromatic number follows trivially from Theorem~\ref{thm:noninduced}.

\section{Conclusion}
\label{sec:conclusion}

One property of the usual chromatic number satisfies is principality: in the setting of
Proposition~\ref{prop:usual} (i.e., the setting of the original Theorem~\ref{thm:ESSAS}), we have
$\chi(I_\cF^+) = \max\{\chi(\cF),1\} = \min\{\chi(I_{\{F\}}^+) \mid F\in\cF\}\cup\{1\}$, that is, the chromatic
number corresponding to a non-empty family of graphs is simply the minimum of the chromatic numbers
corresponding to its elements.

In the more general setting of Theorem~\ref{thm:Ramseynoninduced}, let $\cL$ be a language and let $\cF_0$ be a
family of structures on $\cL\cup\{E\}$. For another family $\cF$ of structures on $\cL\cup\{E\}$, we let
$I_\cF\:\TGraph\leadsto\Forb_{\TGraph\cup T_\cL}((\cF_0\cup\cF)\up^E)$ act identically on $E$. We say that
$T\df\Forb_{\TGraph\cup T_\cL}(\cF_0\up^E)$ satisfies the \emph{principality property} if
\begin{align*}
  \chi(I_\cF) & = \min\{\chi(I_{\{F\}}) \mid F\in\cF\}
\end{align*}
for every non-empty $\cF$.

The setting of Proposition~\ref{prop:usual} shows that $\TGraph$ satisfies the principality
property. Proposition~\ref{prop:ordered} shows that $\TGraph\cup\TLinOrder$ satisfies the principality
property as well. Since an analogous result to Proposition~\ref{prop:ordered} holds for cyclically ordered
graphs (see~\cite{BKV03}) in terms of the cyclic interval chromatic number, it follows that the theory of
cyclically ordered graphs $\TGraph\cup\TCycOrder$ also satisfies the principality property. However, it was
observed in~\cite{GMNPTV19} that the theory of edge-ordered graphs does not satisfy the principality
property. A natural question then is what theories satisfy the principality property?

\medskip

Let us also note that just as Theorem~\ref{thm:ESSAS}, Theorem~\ref{thm:abstractchromatic} also fails to
completely characterize the asymptotic behavior of the maximum number of copies of $K_t$ in $I(M)$ for
$M\in\cM[T]$ when $\chi(I)\leq t$. Even for the case $t=2$, the study of this problem when $\chi(I)\leq 2$ has
been done in a case by case manner and we refer the interested reader again to~\cite{PT06, BKV03, GMNPTV19,
  Tar19} for some of these results for graphs with extra structure.

In Section~\ref{sec:Ramsey} we proved the finiteness of the partite Ramsey numbers, but we made no attempt at
optimizing the upper bounds that can be derived from its proof. Just as with the classical Ramsey numbers,
providing good upper bounds is a very interesting problem in its own right and some work has been done in the
non-partite case for some specific theories~\cite{NR89,CS16,CFLS17,CS18,BV20}.

Let us also point out that the partite Ramsey numbers that we studied can be viewed as the diagonal case. The
non-diagonal case can be defined as follows: given a function $h\:\cP_{\ell,\cL}\to\NN^\ell$ and
$\vec{n}\df(n_1,\ldots,n_\ell)\in\NN^\ell$, we write $\vec{n}\xrightarrow{T} h$ if for every model $M$ of $T$
and every $\ell$-split order $(f,{\preceq})\in\cS_{\ell,V(M)}$ on $V(M)$ with $\lvert f^{-1}(i)\rvert\geq n_i$
for all $i\in[\ell]$, there exists an $\ell$-Ramsey pattern $Q\in\cP_{\ell,\cL}$ over $\cL$ and a set
$W\subseteq V(M)$ such that $\lvert f^{-1}(i)\cap W\rvert\geq h(Q)_i$ for all $i\in[\ell]$ and $M\rest_W$ is
$Q$-uniform with respect to $(f\rest_W, {\preceq_W})$. It follows that for $m\df\max\{h(Q)_i \mid
Q\in\cP_{\ell,\cL}\land i\in[\ell]\}$, if $\min\{n_i \mid i\in[\ell]\}\geq R_{\ell,T}(m)$, then
$\vec{n}\xrightarrow{T} h$. Just as in the classical Ramsey theory, studying the off-diagonal case is an
interesting problem as well.

\bibliographystyle{alpha}
\bibliography{refs}

\end{document}